\theoremstyle{plain}
\newtheorem{theorem}{Theorem}[section]
\newtheorem{lemma}[theorem]{Lemma}
\newtheorem{corollary}[theorem]{Corollary}
\newtheorem{proposition}[theorem]{Proposition}
\theoremstyle{remark}
\newtheorem{remark}[theorem]{Remark}
\newtheorem{example}[theorem]{Example}
\newtheorem{question}{Question}
\newcommand{\ee}{\mathbb{E}}
\newcommand{\cc}{\mathbb{C}}
\newcommand{\hh}{\mathbb{H}}
\newcommand{\rr}{\mathbb{R}}
\newcommand{\nn}{\mathbb{N}}
\newcommand{\qq}{\mathbb{Q}}
\newcommand{\cp}{\mathbb{CP}^1}
\newcommand{\teich}[1]{\mathcal{T}(#1)}
\newcommand{\projteich}[1]{\mathcal{P}(#1)}
\newcommand{\projlocus}[2]{\mathcal C(#1,#2)}
\newcommand{\kmt}[1]{\pi_{#1}}
\newcommand{\pslr}{\operatorname{PSL}_2(\rr)}
\newcommand{\pslc}{\operatorname{PSL}_2(\cc)}
\newcommand{\st}[2]{\operatorname{st}(#1,#2)}
\newcommand{\lk}[2]{\operatorname{lk}(#1,#2)}
\newcommand{\area}[3]{\operatorname{Area}_{#2,#3}(#1)}
\newcommand{\aut}[1]{\operatorname{Aut}(#1)}
\newcommand{\gb}{Galois Bely\u{\i}}
\title[Infinite circle packings on surfaces with conical singularities]{Infinite circle packings on surfaces \\ with conical singularities}
\author{Philip L. Bowers}
\address{Department of Mathematics - Florida State University, 1017 Academic Way, Tallahassee, FL 32304, USA}
\email{bowers@math.fsu.edu}
\author{Lorenzo Ruffoni}
\address{Department of Mathematics and Statistics - Binghamton University, Binghamton, NY 13902, USA}
\email{lorenzo.ruffoni2@gmail.com}
\date{\today}
 \subjclass[2020]{52C26, 30F60}
 \keywords{Circle packing; triangulation; hyperbolic metric; conical singularity; Galois Bely\u{\i} surface.}
\begin{document}

\begin{abstract}
We show that given an infinite triangulation $K$ of a surface with punctures (i.e., with no vertices at the punctures) and a set of target cone angles smaller than $\pi$ at the punctures that satisfy a Gauss-Bonnet inequality, there exists a hyperbolic metric that has the prescribed angles and supports a circle packing in the combinatorics of $K$.
Moreover, if $K$ is very symmetric, then we can identify the underlying Riemann surface and show that it does not depend on the angles.
In particular, this provides examples of a triangulation $K$ and a conformal class $X$ such that there are infinitely many conical hyperbolic structures in the conformal class $X$ with a circle packing in the combinatorics of $K$.
This is in sharp contrast with a conjecture of Kojima-Mizushima-Tan in the closed case.
\end{abstract}

\maketitle

\tableofcontents

% %%%%%% prints list of todos	
% \makeatletter
% \providecommand\@dotsep{5}
% \makeatother
% \listoftodos\relax

\section{Introduction}\label{sec:intro}
\addtocontents{toc}{\protect\setcounter{tocdepth}{1}}
\noindent Let $S$ be a closed connected orientable surface of finite type and $\chi(S)<0$.
We denote by $\teich S$   the Teichm\"uller space of $S$, and by $\projteich S$ the deformation space of complex projective structures on $S$; see \cite{DU09}. 
Let $K$ be a triangulation of $S$, i.e. a simplicial complex whose geometric realization $|K|$ is homeomorphic to $S$.
By the classical Koebe-Andreev-Thurston (KAT) Theorem there is a unique (up to isometry) hyperbolic metric on $S$ that supports a circle packing in the combinatorics of $K$; see \cite{TH97,BO20}.

We denote by $\projlocus SK$ the subspace of $\projteich S$ consisting of projective structures supporting a circle packing in the combinatorics of $K$.
Then the KAT Theorem implies that  $\projlocus SK$ is non-empty.
We consider the natural forgetful map
$$\kmt K:\mathcal C(S,K)\to \teich S .$$
A conjecture by Kojima, Mizushima, and Tan claims that this map is a diffeomorphism when $S$ is a closed surface \cite{KMT03,KMT06,SY18}.

Our goal is to show that this conjecture fails for non-compact surfaces, already in the context of hyperbolic surfaces with conical singularities of amplitude $\theta<\pi$; see Theorem~\ref{thm:failure KMT infinite}.
More precisely, let now $S$ be a non-compact surface of finite type. 
In this context, $\projteich S$ should be taken to be the space of projective structures that have conical singularities or cusps at the ends of $S$, so that it contains the space of hyperbolic metrics with conical singularities or cusps; see \cite{BBCR21} and references therein.
Then, $\projlocus SK$ is defined as in the closed case.
We construct in \S\ref{sec: prescribe conformal} examples of a triangulation $K$ of $S$ and $X\in \teich S$ with a continuum of  structures in $\kmt K^{-1}(X)$, i.e., structures in the conformal class $X$ supporting a circle packing in the combinatorics of $K$.
Hence, the  map $\kmt K$ is very far away from being injective. 
On a thrice-punctured sphere there is only one conformal structure, and this phenomenon occurs for any triangulation.

When $K$ is an ideal triangulation of a punctured surface, one can keep track of the local geometry at the punctures by labelling the ideal vertices.
So, in that case, the aforementioned lack of uniqueness is expected, as one can arbitrarily prescribe the cone angles.
On the other hand, in this paper we are dealing with triangulations of a punctured surface that are genuine infinite triangulations, i.e., punctures are not vertices, but are accumulated by vertices.
In this case the local geometry at a puncture cannot be prescribed by labelling some vertices, and one could expect the local behavior at a puncture to be emergent from global data.
In particular, one could expect some sort of peripheral rigidity for the cone angles.
We show that this is not the case, and one can indeed arbitrarily prescribe some cone angles at the punctures, see Theorem~\ref{thm: main convergence}.

Some interesting questions remain open.
\begin{question}
    We construct examples of $K$ and $X$ for which cone angles can be prescribed in $[0,\pi)$.  Is the set $\pi_K^{-1}(X)$ (pre-)compact?
\end{question}
It should be noted that $\pi_K$ is known to be proper in the context of closed surfaces by work of Schlenker and Yarmola \cite{SY18}.

\begin{question}
In our examples, the structures in $\pi_K^{-1}(X)$ have different cone angles. 
Is the conjecture true if one restricts to the deformation space of structures with fixed cone angles?
\end{question}

%%%%%%%%%%%%%%%%%%%%%%%%%%%%%%%%%%%%%%%%%%%%%%%%%
%%%%%%%%%%%%%%%%%%%%%%%%%%%%%%%%%%%%%%%%%%%%%%%%%
%%%%%%%%%%%%%%%%%%%%%%%%%%%%%%%%%%%%%%%%%%%%%%%%%

 \subsection*{Outline}
 The paper is organized as follows.
In \S\ref{sec:preliminaries} we fix terminology and notation, and prove some general lemmas about the local geometry of circle packings on surfaces.
In \S\ref{sec: compact conical} we prove the existence of a hyperbolic metric with prescribed conical singularities supporting a circle packing in a given combinatorics on a closed surface.
In \S\ref{sec: circle pack infinite} we  prove the same result for infinite triangulations of a non-compact surface of finite type, via a limiting argument.
Finally in \S\ref{sec: prescribe conformal} we provide examples in which the conformal class can be prescribed.

\subsection*{Acknowledgements}
We thank the organizers of the 2021 workshop on The Geometry of Circle Packings in the Thematic Program on Geometric Constraint Systems, Framework Rigidity, and Distance Geometry at the Fields Institute, as well as the  referees for their careful reading and helpful comments.
L. Ruffoni acknowledges partial support by the AMS, the Simons Foundation, and the INDAM-GNSAGA.

%%%%%%%%%%%%%%%%%%%%%%%%%%%%
%%%%%%%%%%%%%%%%%%%%%%%%%%%%
%%%%%%%%%%%%%%%%%%%%%%%%%%%%

\section{Preliminaries}\label{sec:preliminaries}
\addtocontents{toc}{\protect\setcounter{tocdepth}{2}}
\noindent In this section we fix notation and terminology, and prove some lemmas. A standard reference for circle packing theory on constant curvature surfaces is \cite{Ste05} and for ones on complex projective surfaces is \cite{KMT03}.  The natural group of automorphisms of the Riemann sphere $\cp$ is the group of M\"obius transformations $\pslc$.
Since a M\"obius transformation sends circles to circles, the notion of \textit{circle} is well defined on any surface equipped with a geometric structure that is locally modelled on the geometry of the Riemann sphere defined by the group of M\"obius transformations. This includes surfaces endowed with a hyperbolic, Euclidean, or spherical metric, as well as those with a complex projective structure. In the case of the constant curvature metric surfaces, circles are in fact metric circles, i.e., a circle of radius $r>0$ is the set of points whose distance to a fixed point, its center, is the constant $r$ and we consider such a set a circle only if it bounds a closed topological disk, consisting of those points whose distance to the center is at most $r$. We want to emphasize that a circle should always be thought of as the boundary of its \textit{companion disk} even though the disk is rarely mentioned. 

Similarly, a \textit{circle} in a complex projective surface is a curve that develops to a circle in $\cp$ and that is the boundary of a closed topological disk in the surface. More precisely, a \textit{complex projective structure} on a surface $S$ is defined by a pair $(\operatorname{dev},\rho)$, where $\rho:\pi_1(S)\to \pslc$ is a representation and the \textit{developing map} $\operatorname{dev}:\widetilde S\to \cp$ is a $\rho$-equivariant local diffeomorphism defined on the universal cover $\widetilde{S}$. 
Suitable restrictions of $\operatorname{dev}$ provide  local charts on $S$ with values in $\cp$ and change of coordinates in $\pslc$. So a circle in $S$ is a curve $C$ in $S$ that bounds a closed topological disk and for which $\operatorname{dev} (\widetilde{C})$ is a circle in $\mathbb{CP}^{1}$, where $\widetilde{C}$ is a lift of $C$ to $\widetilde{S}$. Another way to say this is that a circle is a curve bounding a closed topological disk whose intersection with any coordinate chart maps into a circle via the chart map. 
Note that there is no well-defined notion of a circle center or a circle radius for a circle in a general complex projective surface, but each circle does come paired with its rarely mentioned companion disk that it bounds.

When $\rho$ takes values in $\pslr$ and $\operatorname{dev}$ takes values in the upper-half plane, the surface $S$  inherits a natural hyperbolic metric.
We will be interested in the case of surfaces of finite type, endowed with hyperbolic metrics for which the punctures have either elliptic or parabolic holonomy (the holonomy is in particular non-trivial). 
In this case, we will extend the definition of circle to allow the companion disk to contain a single puncture. 
Such a circle is the boundary of a once-punctured disk and it is an essential curve on the surface.
Its developed image in $\hh^2$ is a circle invariant under the holonomy around that puncture. This is either parabolic or elliptic, the puncture is a cusp or cone point respectively, and   we will say that the circle is \textit{centered} at that puncture.
These  structures are \textit{tame} in the sense that the developing map extends continuously to the punctures, see \cite{BBCR21}.
So, one could alternatively regard them as projective structures with singularities on a closed surface, and consider the singular points as  marked points instead of punctures. In this way, these circles become circles in the previous definition, i.e., they bound a topological disk.
We refer the reader to \cite{TH97,DU09,BBCR21} for further details.

\subsection{Circle packings on surfaces}\label{sec: background circle packing}
A \textit{circle packing} $\mathcal P$ on a complex projective surface $S$ is a configuration of circles for which the companion disks of any two circles in $\mathcal{P}$ are either disjoint or tangent at a single point. Note that this implies that the interiors of the companion disks of the circles of $\mathcal{P}$ are pairwise disjoint.
Also note that we allow a companion disk to contain one puncture, but not more.
The \textit{nerve} of $\mathcal P$, denoted as $N(\mathcal{P})$, is the simple graph whose vertex set is $\mathcal{P}$ and in which two vertices, or circles, $C_{1}$ and $C_{2}$ are adjacent if and only if the circles are tangent. 

We say that the packing $\mathcal{P}$ \textit{fills} the surface $S$ if each component of the complement of the union of the companion closed disks is a \textit{triangular interstice}, an open topological disk bounded by three arcs of three circles, not containing a puncture of $S$, if there any punctures. 
In case $\mathcal{P}$ fills $S$, $N(\mathcal{P})$ is the $1$-skeleton of the abstract simplicial $2$-complex $K(\mathcal{P})$ whose $2$-simplices are triples of circles that cut out a triangular interstice. When $\mathcal{P}$ fills $S$, $K(\mathcal{P})$ is an abstract triangulation of the surface $S$ and may be realized geometrically in $S$ by an embedding of the nerve $N(\mathcal{P})$ defined as follows:\;for each vertex, a circle of $\mathcal{P}$, choose a point in the interior of its companion disk as a vertex, and for each edge of $N(\mathcal{P})$ choose an arc in $S$ connecting the two vertices and passing through the point of intersection of the corresponding circles, taking care that the interiors of these arcs are pairwise disjoint. This construction provides an embedding of the nerve that cuts out triangular faces whose union is all of $S$. 
There are two facts to notice about this construction. First, when the surface has a constant curvature metric and the vertices are chosen as the centers of the circles and the edges are geodesic arcs connecting centers, we obtain a \textit{geodesic triangulation} in which the faces are metric triangles. Second, the circles of the packing $\mathcal{P}$ accumulate only at the ideal boundary of the surface $S$.

If an abstract simplicial $2$-complex $K$ is isomorphic with $K(\mathcal P)$ for a circle packing $\mathcal{P}$ on $S$,
% that fills $S$, 
we will say that $\mathcal P$ is a circle packing on $S$ \textit{in the combinatorics of} $K$. Given an abstract simplicial triangulation of a surface $S$ we may ask if there is a circle packing on $S$ in the combinatorics of $K$. The answer depends on what one means by the phrase \textit{on a surface $S$}. If $S$ is a complex projective surface with or without an invariant constant curvature metric, there usually is no circle packing on $S$ in the combinatorics of $K$. 
In fact, the triangulation $K$ determines which complex projective structures on a topological surface have circle packings in the combinatorics of $K$. The definitive result for hyperbolic surfaces, for instance, is due to Beardon and Stephenson \cite{BS90} when $K$ has bounded geometry, and to Schramm \cite{Sch91} in general, and says that if $K$ is an abstract triangulation of the topological surface $S$ of hyperbolic type, then there is a unique complete hyperbolic metric on $S$ that supports a circle packing in the combinatorics of $K$; see \cite{Ste05} for a full proof. The question of which complex projective structures on the topological surface $S$ support a circle packing in the combinatorics of $K$ is an open question of considerable interest. The present paper studies this question in case the topological surface $S$ is not compact but is of finite type.

Finally, recall that conformal structures and complex structures on surfaces are equivalent data, in the sense that a conformal structure determines a complex structure, and vice versa.  
In the following, we will conflate the two types of structure whenever convenient.
Moreover, each complex structure  determines a unique metric of constant curvature $-1$, $0$, or $1$ (depending on the topology of the underlying surface). On the other hand, there are many other complex projective structures that are compatible with the given complex structure but that are not defined by a constant curvature metric.

\subsection{Triangulations of surfaces}\label{sec:triangulations}
In this paper  $\bar S$ will denote a closed  connected orientable surface of genus $g\geq 0$. 
Given a finite set of points $P\subseteq \bar S$, we will consider the non-compact surface   $S=\bar S \setminus P$.
(Note that all non-compact surfaces in this paper arise in this way and are therefore of finite type.)
The set $P$ will be called the set of \textit{punctures}, and we will consider metrics that have cusps or conical singularities at $P$.
These will often be defined starting from a triangulation.

A triangulation of  the closed surface $\bar S$ will generally be denoted by $\widehat K$, while a triangulation of the non-compact surface $S$  will be denoted by $K$. 
The $m$-skeletons of $\widehat K$ and $K$ will be denote by $\widehat K^{(m)}$ and $K^{(m)}$ respectively, for $m=0,1,2$.

\begin{remark}
   A triangulation $K$ of a non-compact surface $S$ is an infinite simplicial complex, and no vertex of $K$ is in $P$. Rather, vertices of $K$ accumulate to $P$ and any neighborhood of $P$ contains infinitely many triangles from $K$.
In other words, we are not dealing with ideal triangulations of the punctured surface $S$.
\end{remark}

 In the following, we will assume that a triangulation $K$ of a non-compact surface $S$ satisfies the following conditions.
 \begin{enumerate}
 \item $K$ is simplicial.
 \item Every vertex of $K$ has degree at least $3$. 
 \item $K$ has \textit{bounded degree}:\;there is a uniform upper bound on the number of vertices adjacent to a fixed vertex.
 \end{enumerate}
 Note that these conditions are modelled on the properties of the nerve of a circle packing, according to our definition from \S \ref{sec: background circle packing}.

Finally, following \cite{HS95}, we distinguish triangulations $K$ of a non-compact surface $S$ based on the structure of their ends. 
Let $p\in P$ be one of the punctures. We say that the end corresponding to $p$ is \textit{parabolic}, if the family of edge-paths in $K$ starting at some fixed vertex $v\in  K^{(0)}$ 
and diverging to $p$ has infinite \textit{vertex extremal length}.
Otherwise, we say that the end is \textit{hyperbolic}.
We say that $K$ has \textit{parabolic ends} if all ends are parabolic, and that it has \textit{hyperbolic ends} if all ends are hyperbolic.

Now, suppose that $\mathcal P$ is a circle packing on $S$ in the combinatorics of $K$.
If $K$ has parabolic ends, then $\mathcal P$ fills the surface, and each puncture is accumulated by circles of $\mathcal P$.
% If $K$ has hyperbolic ends, then $\mathcal P$ fills a compact subsurface $S_0$ of $S$, whose boundary consists of a finite collection of \textit{horocyclic circles} $C_1,\dots, C_n$; the circle $C_i$ is centered at the puncture $p_i$, and is not part of the packing.
% Each of the circles $C_i$ is accumulated by circles of $\mathcal P$. 
As an example, if there is a sequence of distinct peripheral edge-loops going into a given end and having constant combinatorial length, then that end must be parabolic. 
This is the case in \S\ref{sec: prescribe conformal} below.

%%%%%%%
\subsection{Labels for triangulations}\label{sec:labels}
Let $\widehat K$ be a triangulation of a closed surface $\bar S$.
A \textit{label} for $\widehat K$ is a function 
$$\widehat R:\widehat K^{(0)}\to (0,+\infty].$$
Given a label for $\widehat K$, we can construct a hyperbolic metric on $S$ (possibly with cusps and conical singularities) and a circle packing in the combinatorics of $\widehat K$. 
When $\widehat R$ only takes finite values, this is done via the following procedure.
For each triangle $F=\{ v_1,v_2,v_3 \}$ in $\widehat K$ identify $F$ with a triangle of $\hh^2$ whose edges have length $\widehat R(v_1)+ \widehat R(v_2)$, $\widehat R(v_2)+\widehat R(v_3)$, and $\widehat R(v_1)+\widehat R(v_3)$.
Such a triangle is unique up to hyperbolic isometries.
Pasting together these hyperbolic triangles defines a hyperbolic metric on the complement of the set of vertices.
(When $\widehat R$ is infinite at some vertices, a little extra care is needed, and we refer to \cite[\S 2]{B93} for details.)

To understand the completion of this metric, define the  \textit{angle sum function} $\theta_{\widehat R}$ of $ \widehat R$ as follows. 
For each vertex $v$ let $\theta_{\widehat R}(v)$ be the sum of the angles of the triangles having $v$ as a vertex.
Then $v$ is a cusp when $\theta_{\widehat R}(v)=0$, or equivalently $\widehat R(v)=+\infty$, and a cone point of angle $\theta_{\widehat R}(v)$ otherwise.
In particular, if $\widehat R$ is bounded then one gets a hyperbolic metric with conical singularities.
In order to simplify the exposition, we will also consider a cusp as a cone point with angle $0$.

Finally, if one takes the geodesic circle of radius $\widehat R(v)$ at each vertex with respect to this metric, then one obtains a circle packing on $\bar S$ in the combinatorics of $\widehat K$.
If a circle is centered at a cusp, then it develops to a horocycle in $\hh^2$. If a circle is centered at a smooth point or a conical point, then it develops to a geodesic circle in $\hh^2$.

%%%
\begin{figure}[h]
    \centering
    \includegraphics[width=.75\textwidth]{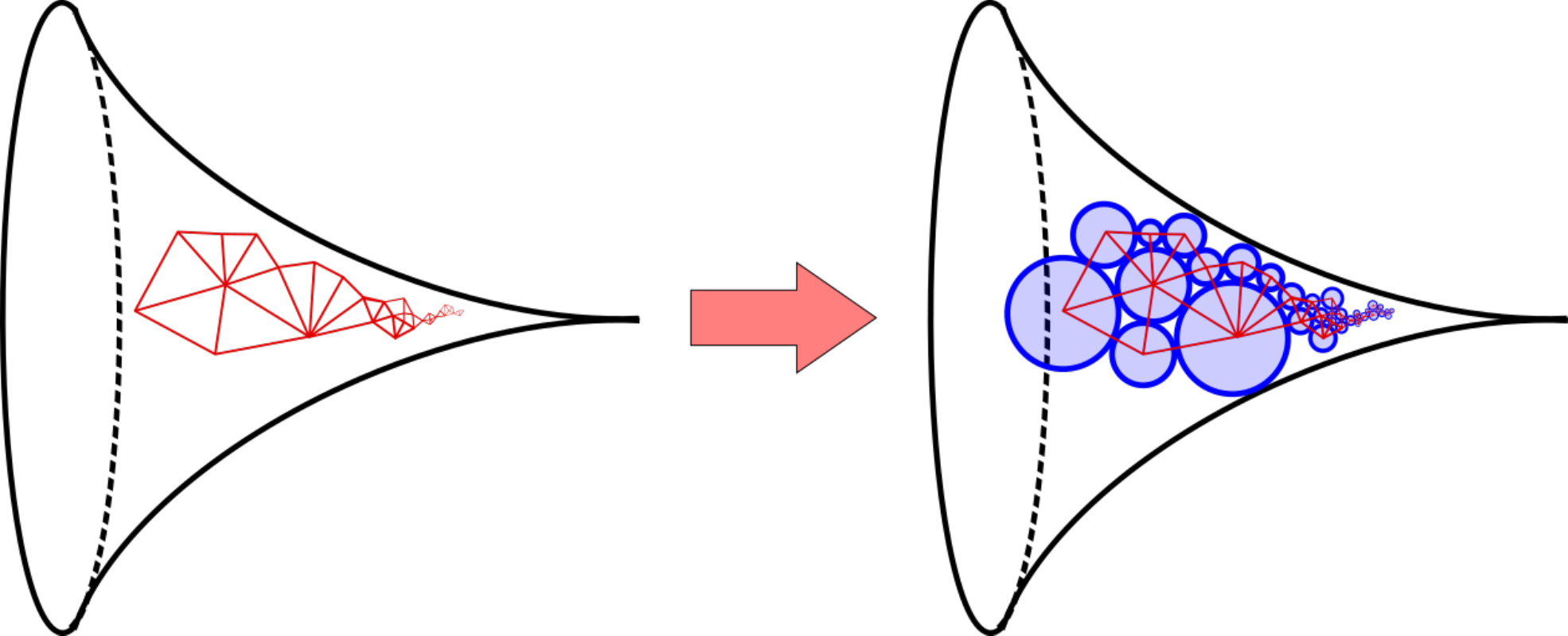}
    \caption{Circle packing an infinite triangulation around a puncture.}
    \label{fig:packing}
\end{figure}

A similar procedure also works for a triangulation $K$ of a non-compact surface $S$ of finite type.
Given a label for $K$ that takes only finite values, one obtains a hyperbolic metric on $S$ supporting a circle packing in the combinatorics of $K$.
However, in this case it is not clear what the completion of this metric looks like.
The main difference is that the punctures of $S$ are not vertices of $K$, so it is not possible to keep track of the geometry at a puncture by means of a label or angle sum function.
So, in a certain sense the peripheral geometry has to be emergent behavior, a priori determined by the global structure of the circle packing.
Our main technical contribution is to show how to prescribe cone angles also in this case, see \S\ref{sec: circle pack infinite}.
Note that in this case, the circle packing has no circles centered at punctures. Rather, circles accumulate into the punctures; see Figure~\ref{fig:packing}.

%%%%%%%%%%%%%%%%%%%%%%%%%%%%%%%%%%%%%%%%%%%%%%%%%

\subsection{Flowers in a circle packing}
We conclude with some technical lemmas about the interaction between local geometry and local combinatorics in a circle packing.

A \textit{flower} is a collection of circles $\mathcal F = \{C, C_1, \dots, C_k\}$ such that $C$ is tangent to all the other ones, and for $i=1,\dots, k$ the circle $C_i$ is tangent to $C,C_{i-1},C_{i+1}$, with the subscript taken modulo $k$.
The circle $C$ is the \textit{center} and the circles $C_1\dots,C_k$ are the \textit{petals} of the flower. 
See Figure~\ref{fig:circlepacking}.
The integer $k$ is called the \textit{degree} of the flower, and satisfies $k\geq 3$ because $K$ is simplicial.
Given a circle $C$ in a circle packing $\mathcal P$, the collection of circles adjacent to $C$ forms a flower. Moreover, if the circle packing $\mathcal P$ has degree bounded by $d$, then every flower in $\mathcal P$ has degree at most $d$.

\begin{figure}[h]
    \centering
    \includegraphics[width=.75\textwidth]{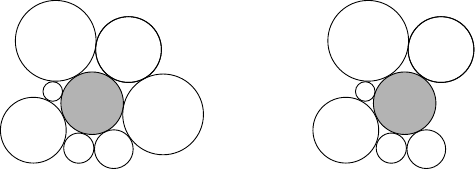}
    \caption{A flower of degree $7$ (left) and a partial flower of degree $6$ (right). The center is the circle bounding the shaded disk. The petals are the surrounding circles.}
    \label{fig:circlepacking}
\end{figure}

Note that these definitions make sense on any surface equipped with a complex projective structure, possibly with cones and cusps.
In the particular case that the projective structure arises from a Euclidean or hyperbolic metric, we are interested in bounds on the radius of a circle that appears in a flower in terms of the degree and the radii of the other circles.

We start with the following upper bound, which is Lemma 5 in \cite{BS91}. 
 The exact optimal value for the constant is $H_k=\ln(\sin(\pi/k)^{-1})$, which correspond to a flower in which all petals are horocyclic.
\begin{lemma}\label{lem: H ring lemma}
Let $C,C_1\dots,C_k$ form a flower of center $C$ and degree $k$ in $\mathbb H^2$.
Let $R_0$ be the (hyperbolic) radius of $C$.
Then there exists a constant $H_k>0$ depending only on $k$ such that
$R_0\leq H_k$.
Moreover, the constants can be chosen such that if $k_1<k_2$ then $H_{k_1}<H_{k_2}$.
\end{lemma}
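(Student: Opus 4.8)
The plan is to bound $R_0$ by the elementary constraint that the $k$ petals must close up around $C_0$, so that the angles they subtend at the center $O$ of $C_0$ sum to exactly $2\pi$. Write $r_i$ for the radius of the petal $C_i$, let $O_i$ be its center, and let $\alpha_i=\angle O_iOO_{i+1}$ be the angle at $O$ in the geodesic triangle $OO_iO_{i+1}$ (indices modulo $k$). Since external tangency gives hyperbolic distances $d(O,O_i)=R_0+r_i$ and $d(O_i,O_{i+1})=r_i+r_{i+1}$, the hyperbolic law of cosines yields
\[
\cos\alpha_i=\frac{\cosh(R_0+r_i)\cosh(R_0+r_{i+1})-\cosh(r_i+r_{i+1})}{\sinh(R_0+r_i)\sinh(R_0+r_{i+1})},
\]
and the petals closing up around $C_0$ means $\sum_{i=1}^k\alpha_i=2\pi$.

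The core of the argument is to show that, for fixed $R_0$, each $\alpha_i$ is maximized in the limit where the two adjacent petals become horocyclic, i.e.\ $r_i,r_{i+1}\to\infty$. First I would rewrite the expression above using the addition formulas for $\cosh$ and $\sinh$; setting $c=2R_0$ this puts $\cos\alpha_i$ in the form $(1-\cosh c)\coth(R_0+r_i)\coth(R_0+r_{i+1})+\sinh c\,[\coth(R_0+r_i)+\coth(R_0+r_{i+1})]-\cosh c$. Differentiating in $r_i$ and using that $\coth$ has derivative $-1/\sinh^2$, the sign of $\partial_{r_i}\cos\alpha_i$ is governed by the factor $g=\sinh c-(\cosh c-1)\coth(R_0+r_{i+1})$; the identity $(\cosh c-1)\coth(c/2)=\sinh c$ shows $g$ vanishes at $r_{i+1}=0$ and is increasing in $r_{i+1}$, so $g\ge 0$ and $\cos\alpha_i$ is nonincreasing in $r_i$ (and symmetrically in $r_{i+1}$). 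Hence $\cos\alpha_i$ attains its infimum as $r_i,r_{i+1}\to\infty$, where the expression tends to $1-2e^{-2R_0}$. Equivalently $\sin(\alpha_i/2)\le e^{-R_0}$, with equality exactly for horocyclic petals (note $\alpha_i<\pi$ automatically, being a triangle angle, so the half-angle estimate is legitimate).

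Summing over $i$ then finishes the proof: from $2\pi=\sum_i\alpha_i\le 2k\arcsin(e^{-R_0})$ we get $\arcsin(e^{-R_0})\ge\pi/k$, hence $e^{-R_0}\ge\sin(\pi/k)$ and $R_0\le\ln(\sin(\pi/k)^{-1})=:H_k$. Since all petals being horocyclic forces equality throughout, this value is optimal, matching the stated constant. Finally, for the monotonicity $H_{k_1}<H_{k_2}$ when $k_1<k_2$: because every flower has degree $k\ge 3$, the relevant angles $\pi/k$ lie in $(0,\pi/3]$, where $\sin$ is strictly increasing; thus $k\mapsto\sin(\pi/k)$ is strictly decreasing and $k\mapsto-\ln\sin(\pi/k)=H_k$ is strictly increasing.

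The main obstacle is the monotonicity step, i.e.\ verifying that the horocyclic configuration is genuinely extremal rather than merely a boundary case. Everything else is either a direct application of the law of cosines or a one-line trigonometric estimate; the content is in checking that the auxiliary factor $g$ has a constant sign, which hinges on its vanishing precisely at $r=0$.
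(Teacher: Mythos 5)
Your proof is correct and complete, and every step checks out: the hyperbolic law of cosines gives your formula for $\cos\alpha_i$; the rewrite in terms of $\coth$ is accurate; the derivative factor $g=\sinh c-(\cosh c-1)\coth(R_0+r_{i+1})$ is indeed nonnegative, by the identity $(\cosh c-1)\coth(c/2)=\sinh c$ together with $\coth$ being decreasing, so $\cos\alpha_i$ is nonincreasing in each petal radius; the horocyclic limit of $\cos\alpha_i$ is $1-2e^{-2R_0}$, equivalently $\sin(\alpha_i/2)\le e^{-R_0}$; and combining this with $\sum_{i=1}^k\alpha_i=2\pi$ gives $e^{-R_0}\ge\sin(\pi/k)$, hence $R_0\le\ln(\sin(\pi/k)^{-1})$, with the monotonicity in $k$ following since $\pi/k\in(0,\pi/3]$ for $k\ge 3$. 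For comparison: the paper does not prove this lemma at all; it cites it as Lemma 5 of \cite{BS91} and only records the sharp constant $H_k=\ln(\sin(\pi/k)^{-1})$, attained in the horocyclic configuration. Your argument is essentially the standard proof behind that citation (bound each central angle by its horocyclic maximum via monotonicity, then use the closing-up angle sum), and it has the merit of recovering exactly the sharp constant rather than a cruder bound. The one assertion you use without justification is that the petal chain of an embedded flower winds exactly once around $C_0$, which is what gives $\sum_i\alpha_i=2\pi$; this is standard for circle packings (pairwise disjoint interiors preclude extra winding), but it deserves an explicit sentence in a fully written-out proof.
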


The next statement is the classical Ring Lemma from \cite{RS87}.
The exact value of the constant $E_k$ has been obtained in \cite{HA88}.
We will not need the exact value, but only the fact that $0<E_k<1$. 

\begin{lemma}[Euclidean Ring Lemma]\label{lem: E ring lemma}
Let $C,C_1\dots,C_k$ form a flower of center $C$ and degree $k$ in $\ee^2$.
Let $r$ be the (Euclidean) radius of $C$ and let $r_i$ be the (Euclidean) radius of $C_i$ for $i=1,\dots,k$.
Then there exists a constant $0<E_k<1$ depending only on $k$ such that $r_i\geq E_k r$ for each $i=1,\dots,k$.
 Moreover, the constants can be chosen such that if $k_1<k_2$ then $E_{k_1}>E_{k_2}$.
\end{lemma}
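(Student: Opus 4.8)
The plan is to normalise the configuration, extract the single trigonometric identity that records how two adjacent petals are seen from the centre, and then run a short compactness argument whose only genuinely geometric input is that distinct petals of a flower have disjoint interiors. First I would use the invariance of tangency and of all radius ratios under Euclidean similarities to normalise $r_0=1$. Writing $O$ for the centre of $C_0$ and $O_i$ for that of $C_i$, one has $|OO_i|=1+r_i$ and $|O_iO_{i+1}|=r_i+r_{i+1}$ for consecutive petals. The law of cosines applied to the triangle $OO_iO_{i+1}$ then gives, after the denominator factors as $(1+r_i)(1+r_{i+1})$, the identity
\[
\sin^2\!\Big(\frac{\beta_i}{2}\Big)=\frac{r_i}{1+r_i}\cdot\frac{r_{i+1}}{1+r_{i+1}},
\]
where $\beta_i=\angle O_iOO_{i+1}\in(0,\pi)$. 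Since the petals wind once around $C_0$ we also have the closing relation $\sum_{i=1}^k\beta_i=2\pi$. Setting $x_i=r_i/(1+r_i)\in(0,1)$ turns these into $\beta_i=2\arcsin\sqrt{x_ix_{i+1}}$ and $\sum_i\arcsin\sqrt{x_ix_{i+1}}=\pi$, and the desired conclusion becomes a uniform positive lower bound on $\min_i x_i$. The same computation applied to two petals $C_p,C_q$ that are tangent to $C_0$ but need not be tangent to one another shows that their disjointness is equivalent to $\sin^2(\delta/2)\ge x_px_q$, where $\delta\in[0,\pi]$ is the angle their centres subtend at $O$; this is the one inequality through which I will feed the geometry.

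The core of the proof is then a compactness-and-degeneration argument. I would set $E_k=\inf\min_i r_i$, the infimum taken over all flowers of degree $k$ with $r_0=1$, and suppose for contradiction that $E_k=0$. Choosing flowers with $\min_i r_i\to0$ and passing to a subsequence, the vectors $(x_1,\dots,x_k)\in[0,1]^k$ converge to some $x^{\ast}$; by continuity $x^{\ast}$ still satisfies the closing relation $\sum_i 2\arcsin\sqrt{x_i^{\ast}x_{i+1}^{\ast}}=2\pi$ and all the disjointness inequalities above, while $\min_i x_i^{\ast}=0$. Let $Z=\{i:x_i^{\ast}=0\}\neq\varnothing$. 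If at most one index lies outside $Z$, then every $\beta_i^{\ast}$ contains a vanishing factor, so every $\beta_i^{\ast}=0$ and the closing relation reads $0=2\pi$, absurd. Otherwise at least two petals survive, so some nonempty run $C_a,\dots,C_b\subseteq Z$ is bounded by two distinct surviving petals $C_{a-1},C_{b+1}$; these are non-adjacent, hence disjoint, yet the angle they subtend, $\beta_{a-1}^{\ast}+\dots+\beta_b^{\ast}$, vanishes because each summand contains a zero factor, so the disjointness inequality forces $0\ge x_{a-1}^{\ast}x_{b+1}^{\ast}>0$, again absurd. Hence $E_k>0$.

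The step I expect to be the main obstacle is precisely this passage to the limit: one must ensure that the identity $\beta_i=2\arcsin\sqrt{x_ix_{i+1}}$ and the disjointness inequality (derived in the external-tangency regime $\delta\le\pi$) survive in the closed limit, which is clean here only because the relevant angles collapse to $0$ for large index. Handling petals whose radius tends to $+\infty$, which become half-planes tangent to $C_0$, fits the same framework once radii are compactified in $[0,+\infty]$, with the corresponding $x_i\to1$; the collision of the two bounding petals of a vanishing run produces the same contradiction with disjointness.

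Finally, the claims $0<E_k<1$ and the monotonicity $E_{k_1}>E_{k_2}$ for $k_1<k_2$ are not the substance. Since any constant no larger than $\min(E_k,1)$ is an admissible lower bound, one is free to choose the $E_k$ strictly decreasing inside $(0,1)$; and the sharp values, together with their monotonicity, which reflects that a flower of larger degree can accommodate a proportionally smaller petal, are exactly those computed in \cite{HA88}. For our applications only the qualitative conclusion is needed, so I would record the existence of the constant via the argument above and invoke \cite{HA88} for the rest.
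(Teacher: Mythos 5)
Your proposal is correct, but it follows a genuinely different route from the paper: the paper does not prove Lemma~\ref{lem: E ring lemma} at all, treating it as the classical Ring Lemma and citing \cite{RS87} for the statement and \cite{HA88} for the sharp constants. Your argument is a self-contained proof, and the key steps check out: the law of cosines in the triangle $OO_iO_{i+1}$ does give $\sin^2(\beta_i/2)=x_ix_{i+1}$ with $x_i=r_i/(1+r_i)$; the closing relation $\sum_i\beta_i=2\pi$ and the inequality $\sin^2(\delta/2)\ge x_px_q$ correctly encode the only two geometric inputs (petals wind once around $C_0$ and have pairwise disjoint interiors); and the two-case analysis of the degenerate limit is exhaustive, with each case yielding the stated contradiction. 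Two remarks. First, the step you flag as the main obstacle --- survival of the identities in the limit --- is not actually an issue: all the relations are closed numerical constraints on $(x_1,\dots,x_k)\in[0,1]^k$, so they pass to the limit by bare continuity, and in your second case the contradiction in fact already occurs at finite index $m$, since $\delta^{(m)}\le\beta^{(m)}_{a-1}+\dots+\beta^{(m)}_b\to0$ while $x^{(m)}_{a-1}x^{(m)}_{b+1}$ stays bounded away from zero, violating disjointness for large $m$; for the same reason the half-plane compactification is superfluous for the lemma as stated, where every $r_i$ is finite. (Also, ``non-adjacent, hence disjoint'' is slightly misphrased: \emph{all} pairs of petals have disjoint interiors in a packing, and the inequality $\sin^2(\delta/2)\ge x_px_q$ holds for every pair, with equality exactly at tangency; this only helps you.) As for what each approach buys: the cited classical route of \cite{RS87,HA88} produces explicit, indeed sharp, constants $E_k$, from which $0<E_k<1$ and the monotonicity $E_{k_1}>E_{k_2}$ for $k_1<k_2$ are immediate, whereas your compactness argument is shorter and more transparent about the geometry but yields no effective constant, so that, as you note, the normalization $E_k<1$ and strict monotonicity must be arranged by hand by shrinking the constants --- which is legitimate, since any smaller positive constant remains a valid lower bound, and is all that Proposition~\ref{prop: radii estimates} requires.
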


Given a hyperbolic surface without conical singularities, the developing map embeds the universal cover isometrically into $\hh^2$.
If one also has a circle packing on the surface, then circles centered at regular points develop to geodesic circles, and circles centered at cusps develop to horocycles.
Moreover, a flower centered at a regular point develops injectively to a flower in $\hh^2$ of the same degree, and a flower centered at a cusp develops to a configuration of circles containing a horocycle tangent to all other circles.

On the other hand, for a hyperbolic surface  with  conical singularities the developing map is not an embedding.
Moreover, any flower  for which the center contains a point with cone angle $\theta \in  (0,2\pi)$ will fail to develop to a flower in $\hh^2$, as the developed images of the petals will not close up. 
% Note that the sum of the angles at $C$ in the triangles $\{C,C_i,C_{i+1}\}$ is equal to $\theta$, both  when measured on the surface and in the developed image.

To address this phenomenon, we introduce the following terminology.
A \textit{partial flower} $\mathcal F_0$ is a configuration of circles obtained from a flower $\mathcal F$ by removing some consecutive petals. 
The number of petals that are left is called the \textit{degree} of the partial flower.
See Figure~\ref{fig:circlepacking}.
In this case, we say that $\mathcal F_0$ can be \textit{completed} to $\mathcal F$, in the sense that it is possible to add  circles to $\mathcal F_0$ to recover $\mathcal F$.
Note that, as an abstract configuration of circles (i.e., not part of a packing), the same partial flower can be completed to different flowers by adding different collections of circles to it.

\begin{lemma}\label{lem: flower at cone}
Let $\bar S$ be a finite area hyperbolic surface (possibly with finitely many cusps and cone angles).
Let $\mathcal P$ be a circle packing on $\bar S$ (possibly with some circles centered at cusps and cones). 
Let $\mathcal F=\{C,C_1,\dots, C_k\} \subseteq \mathcal P$ be a flower of degree $k$ with $C$ centered at a   point of angle $\theta\in (0, 2\pi]$.
Then $\mathcal F$ develops to a partial flower in $\hh^2$ that can be completed to a flower of degree at most $d_{k,\theta}=k\left( \lfloor \frac{2\pi }{\theta} \rfloor +1\right)$.
\end{lemma}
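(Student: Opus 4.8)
The plan is to analyze the developing map in a neighborhood of the cone point and exploit the rotational holonomy to build the required completion explicitly out of rotated copies of $\operatorname{dev}(\mathcal F)$.

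First I would set up the local picture. Let $x$ be the center of $C_0$, a point of cone angle $\theta$. Restricting the developing map to a connected lift of a punctured neighborhood of $x$, the holonomy of a small loop encircling $x$ is the rotation $\phi$ about the developed image $p=\operatorname{dev}(x)$ by the angle $\theta$. Since $C_0$ is a metric circle centered at $x$ and $\operatorname{dev}$ is a local isometry, $C_0$ develops to an arc of the geodesic circle of radius $R_0$ about $p$, and the petals $C_1,\dots,C_k$ develop to $k$ geodesic circles tangent to it, filling an angular sector of measure exactly $\theta$ at $p$. Thus $\operatorname{dev}(\mathcal F)$ is a partial flower spanning angle $\theta$ in $\hh^2$. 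The one point I must record carefully is the behaviour at the \emph{seam}: the tangency $C_k\sim C_1$ that closes $\mathcal F$ in $\bar S$ is realized by a path encircling the cone point, so under development it becomes the tangency of $\operatorname{dev}(C_k)$ with $\phi\big(\operatorname{dev}(C_1)\big)$, rather than with $\operatorname{dev}(C_1)$ itself.

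Next I would assemble the completion from rotated copies. For $j\ge 0$ the circles $\phi^{j}(\operatorname{dev}(C_1)),\dots,\phi^{j}(\operatorname{dev}(C_k))$ form a copy of the partial flower rotated by $j\theta$ about $p$; since $\phi$ is an isometry fixing the developed circle of $C_0$, each such copy is again tangent to that circle with consecutive petals tangent. The seam observation, pushed forward by $\phi^{j}$, shows that the last petal of the $j$-th copy, $\phi^{j}(\operatorname{dev}(C_k))$, is tangent to the first petal of the $(j+1)$-st copy, $\phi^{j+1}(\operatorname{dev}(C_1))$. Hence successive copies chain together into a single partial flower about $p$, each copy contributing $k$ petals and angular width $\theta$, as long as the accumulated width does not exceed $2\pi$.

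Finally I would count. Taking $m=\lfloor 2\pi/\theta\rfloor$ copies yields a valid partial flower with $mk$ petals spanning angle $m\theta\le 2\pi$. If $m\theta=2\pi$ the chain closes up on its own, since $\phi^{m}$ is the identity, into a genuine flower of degree $mk\le d_{k,\theta}$ containing $\operatorname{dev}(\mathcal F)$ as a partial flower. Otherwise there remains an angular gap of size $g=2\pi-m\theta\in(0,\theta)$ between $\phi^{m-1}(\operatorname{dev}(C_k))$ and $\operatorname{dev}(C_1)$; completing this gap into a closed flower adds at most $k$ further petals, since $g<\theta$ and an angle $\theta$ is already spanned by the $k$ petals of one copy. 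The total degree is then at most $mk+k=k\big(\lfloor 2\pi/\theta\rfloor+1\big)=d_{k,\theta}$. I expect the genuine obstacle to be precisely this last step: the $m$-th copy generically overshoots $2\pi$, so one cannot simply append it, and one must instead truncate it and readjust the radii of the final petals so that the configuration closes up to an embedded flower tangent to both its neighbour and $\operatorname{dev}(C_1)$. Verifying that this closure costs no more than $k$ petals, rather than merely finitely many, is where the argument must be pinned down; the remainder is bookkeeping with the rotation $\phi$ together with the elementary inequality $\lfloor 2\pi/\theta\rfloor\,\theta\ge 2\pi-\theta$.
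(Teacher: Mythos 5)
Your proposal follows essentially the same route as the paper's proof: develop the flower at the cone point to a partial flower in $\mathbb{H}^2$, chain $\lfloor 2\pi/\theta\rfloor$ copies rotated by the cone holonomy, and close the remaining gap of angle less than $\theta$ at the cost of at most $k$ additional petals, for a total degree of at most $k\left(\lfloor 2\pi/\theta\rfloor+1\right)$. The gap-closing step you flag as the crux is treated just as briefly in the paper (it merely observes that an $(m+1)$-st rotation would overlap and concludes the bound), so your write-up is, if anything, more explicit about the seam and holonomy bookkeeping.
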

\begin{proof}
    Let $S$ be the surface obtained from $\bar S$ by removing all cusps and cones.
    Lift the packing $\mathcal P$ to a packing of the universal cover of $S$, and then develop to a collection of circles in $\hh^2$.
    The flower $\mathcal F$ develops to a partial flower $\mathcal F_0'$ in $\hh^2$ of degree $k$.
    If $\theta=2\pi$ then
    this is actually already a flower.
    Otherwise, if $0<\theta < 2\pi$,  then we can rotate the petals (i.e., continue the development) around the center up to $m=\lfloor \frac{2\pi}{\theta} \rfloor$ times to obtain a partial flower $\mathcal F_0'$ of degree $k\lfloor \frac{2\pi }{\theta} \rfloor$ (or a flower, if $\theta=\frac{2\pi}{m}$).
    But if we rotate $m+1$ times, then we have an overlap.
    This means that $\mathcal F_0'$ can be completed to a flower $\mathcal F'$ in $\hh^2$ of degree at most $k\left( \lfloor \frac{2\pi }{\theta} \rfloor +1\right)$.
\end{proof}

\begin{remark}
    A similar statement can be obtained for $\theta > 2\pi$. This will not be needed in the present paper.
\end{remark}

\begin{proposition}\label{prop: radii estimates}
Let $\bar S$ be a finite area hyperbolic surface (possibly with finitely many cusps and cone angles $\Theta$ in $(0,2\pi)$).
Let $\mathcal P$ be a circle packing on $\bar S$ (possibly with some circles centered at cusps and cones). 
Assume that $\mathcal P$ has degree bounded by $d$.
Then there exist  constants $H_{B}>0$ and $0<E_D<1$ depending only on $d$ and $\Theta$ such that the following holds.
\begin{enumerate}
    \item \label{item:upper} If $C \in \mathcal P$ is a circle not centered at a cusp, then its (hyperbolic) radius is at most $H_{B}$.

    \item \label{item:comparable} If $C_1,C_2 \in \mathcal P$ are tangent circles not centered at cusps, with (hyperbolic) radii $R_1,R_2$, then we have 
    $$E_D\leq \frac{R_1}{R_2}\leq \frac{1}{E_D}.$$
\end{enumerate}
\end{proposition}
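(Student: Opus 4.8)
The plan is to read both estimates off the developed flowers, using Lemma~\ref{lem: flower at cone} to replace every configuration on $\bar S$ by an honest flower in $\hh^2$ of controlled degree, and then feeding these flowers into the two Ring Lemmas. Throughout, let $\theta_{\min}$ be the smallest angle occurring among the regular and cone points of $\bar S$ (so $\theta_{\min}=\min\Theta$ if $\Theta\neq\emptyset$ and $\theta_{\min}=2\pi$ otherwise), and set
$$D = d\left(\left\lfloor \frac{2\pi}{\theta_{\min}} \right\rfloor + 1\right).$$
Since every flower of $\mathcal P$ has degree at most $d$ and every center not at a cusp has angle at least $\theta_{\min}$, Lemma~\ref{lem: flower at cone} guarantees that such a flower develops to a partial flower in $\hh^2$ which completes to a genuine flower of degree at most $D$.

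For item~\ref{item:upper} I would take a circle $C\in\mathcal P$ centered at a regular or cone point and consider its flower $\mathcal F$. The completed flower $\mathcal F'$ in $\hh^2$ has as its center circle a geodesic circle whose hyperbolic radius equals that of $C$: the metric radius is preserved by the developing map, and the rotations used to complete $\mathcal F'$ merely close the developed arc into a full circle of that same radius. Applying the Hyperbolic Ring Lemma (Lemma~\ref{lem: H ring lemma}) to $\mathcal F'$ and invoking the monotonicity of $H_k$ bounds the radius of $C$ by $H_D$, so one may take $H_{B}:=H_D$.

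For item~\ref{item:comparable} the key observation is that two tangent circles $C_1,C_2$ are each a petal in the other's flower. Developing and completing the flower of $C_1$ produces a flower $\mathcal F'$ in $\hh^2$ of degree at most $D$ whose center is a circle of radius $R_1$ and one of whose petals is the developed image of $C_2$, of radius $R_2$. After normalizing by an isometry of $\hh^2$ so that the center sits at the origin of the Poincar\'e disk, I would pass to the \emph{Euclidean} geometry of the disk: geodesic circles are Euclidean circles and tangencies are preserved, so $\mathcal F'$ is a bona fide Euclidean flower of degree at most $D$. The Euclidean Ring Lemma (Lemma~\ref{lem: E ring lemma}), together with the monotonicity of its constant, then bounds the Euclidean radius $\rho_2$ of the petal below by $\varepsilon\,\rho_1$, where $\rho_1$ is the Euclidean radius of the center and $\varepsilon=\varepsilon(D)\in(0,1)$.

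The remaining and principal difficulty is converting these Euclidean estimates back into hyperbolic radii, since the Euclidean radius of a geodesic circle depends on its position in the disk; this is precisely why item~\ref{item:upper} is proved first. Because all radii are bounded by $H_{B}$, the whole flower lies in a fixed compact subdisk $\{|z|\le r_0\}$ with $r_0=r_0(H_{B})<1$, on which the hyperbolic conformal factor $\tfrac{2}{1-|z|^2}$ is pinched between $2$ and $\tfrac{2}{1-r_0^2}$. Comparing the hyperbolic and Euclidean diameters of each circle through this pinching yields $2\rho\le R\le \tfrac{2}{1-r_0^2}\rho$ for every circle in the flower, with constants depending only on $d$ and $\Theta$. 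Combining this with the Euclidean estimate gives $R_2\ge (1-r_0^2)\varepsilon\, R_1$; one then sets $E_D:=(1-r_0^2)\varepsilon$, which is visibly in $(0,1)$, and runs the identical argument with $C_1$ and $C_2$ interchanged to obtain the two-sided comparison.
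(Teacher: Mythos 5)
Your proposal is correct, and for most of its length it coincides with the paper's own argument: develop each flower and complete it via Lemma~\ref{lem: flower at cone} to a genuine flower in $\hh^2$ of degree bounded by a constant depending only on $d$ and $\Theta$, then apply the Hyperbolic Ring Lemma (Lemma~\ref{lem: H ring lemma}) for item~\eqref{item:upper} and the Euclidean Ring Lemma (Lemma~\ref{lem: E ring lemma}) in the disk model for item~\eqref{item:comparable}. (Your single uniform bound $D$, the \emph{maximum} possible completed degree, is in fact the right constant to feed into both Ring Lemmas given their monotonicity; the paper's use of the minimum of the $d_{k,\theta}$ in $E_D$ is a slip.) The genuine divergence is in the step you rightly identify as the principal difficulty: converting the Euclidean inequality $\rho_2\ge E_D\,\rho_1$ into a comparison of hyperbolic radii. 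The paper does this by explicit computation: with $C_1$ normalized at the origin, it writes $R_1=2\operatorname{arctanh}(r_1)$, expresses $R_2$ through $\operatorname{arctanh}$ of Euclidean data along the common diameter, and uses the superadditivity $\operatorname{arctanh}(\lambda x)\ge\lambda\operatorname{arctanh}(x)$ for $\lambda\ge1$ (read off the power series) to conclude $R_2\ge E_D R_1$ with the \emph{same} constant. You instead pinch the conformal factor $2/(1-|z|^2)$ on a compact subdisk whose size is controlled by the bound $H_{B}$ of item~\eqref{item:upper}. Your route is softer and more robust (it would apply to any metric comparable to the Euclidean one on compact sets, and it makes transparent why item~\eqref{item:upper} must come first), at the cost of degrading the constant to $(1-r_0^2)E_D$, which is harmless since the Proposition only asserts existence of some constant in $(0,1)$.

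One claim in your write-up is false as stated, though the proof survives. You assert that \emph{the whole flower} lies in $\{|z|\le r_0\}$ because ``all radii are bounded by $H_{B}$.'' Petals of the flower of $C_1$ may be centered at cusps: these develop to horocycles, which have infinite hyperbolic radius, are internally tangent to the unit circle, and lie in no compact subdisk (item~\eqref{item:upper} explicitly excludes them). Fortunately, your chain $R_2\ge 2\rho_2\ge 2E_D\rho_1\ge E_D(1-r_0^2)R_1$ never uses the pinching for the petals: the lower bound $R_2\ge 2\rho_2$ holds globally because the conformal factor is everywhere at least $2$, and the upper bound $R_1\le\frac{2}{1-r_0^2}\rho_1$ needs only $C_1\subseteq\{|z|\le r_0\}$, which holds with $r_0=\tanh(H_{B}/2)$ since $C_1$ is centered at the origin with hyperbolic radius at most $H_{B}$. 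So you should restrict the pinching statement to the two circles $C_1,C_2$ in question (both not centered at cusps by hypothesis); with that correction, and the symmetric argument after exchanging their roles, the proof is complete.
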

\proof 
As in the previous proof, let $S$ be the surface obtained from $\bar S$ by removing all cusps and cones.
Lift the packing $\mathcal P$ to a packing of the universal cover of $S$, and then develop to a collection of circles in $\hh^2$.

Let $D=D(d,\Theta)$ be the minimum and let $B=B(d,\Theta)$ be the maximum of the constants $d_{k,\theta}$ from Lemma~\ref{lem: flower at cone} over all circles, where $k$ is the degree of the flower and $\theta$ the cone angle.
Since the packing has degree bounded by $d$ and there are only finitely many cone angles, this is well-defined.

Statement \eqref{item:upper} is obtained  as follows.
Let $C\in \mathcal P$ be a circle not centered at a cusp.
If $k$ is the degree of the flower of $C$ and $\theta$ is the angle at its center, then by Lemma~\ref{lem: flower at cone} we can develop it to a partial flower in $\hh^2$ that can be completed to a flower of degree at most $d_{k,\theta}\leq B$. 
Then by Lemma~\ref{lem: H ring lemma} we have that the radius of $C$ is at most $H_{B}$.

To prove  statement \eqref{item:comparable}, let $C_1,C_2 \in \mathcal P$ be two tangent circles not centered at cusps, and let their (hyperbolic) radii be $R_1,R_2$.
Let $\mathcal F_1$ be the flower centered at $C_1$, and note that $C_2$ is a petal. Let $k_1$ be the degree of $\mathcal F_1$.
Let $p$ be the center of $C_1$ and let $\theta_1\leq 2\pi$ be the cone angle at $p$.
By Lemma~\ref{lem: flower at cone} we can develop it to a partial flower in $\hh^2$ that can be completed to a flower $\mathcal F_1'$ of degree at most $d_1=d_{k_1,\theta_1}$.
 Realize $\hh^2$ in the unit disk.
Since hyperbolic circles are Euclidean circles,  we  can regard $\mathcal F_1'$ as a flower in $\ee^2$.

From Lemma~\ref{lem: E ring lemma} we obtain the following relations between their Euclidean radii: $r_2\geq E_{d_1} r_1\geq E_D r_1.$
Since the Euclidean and hyperbolic metric on the unit disk are comparable, it turns out that the same inequalities hold for the hyperbolic radii $r_1,r_2$.
This can be seen via the following computations.

Without loss of generality, we can assume that $C_1$ sits at the origin of the unit disk, and $C_2$ is on its right, with centers sitting on the same horizontal diameter.
The hyperbolic radius of $C_1$ is $R_1= 2\operatorname{arctanh}(r_1)$
and the hyperbolic radius of $C_2$ is  $R_2= 2\operatorname{arctanh}(r_1+r_2) -R_1.$
Since $r_2\geq E_D r_1$, we obtain that
$$R_2\geq  2\operatorname{arctanh}((E_D+1)r_1) -R_1.$$
Using the power series representation of $\operatorname{arctanh}(x)$ at $x=0$, and noting that $E_D+1>1$ and $(E_D+1)r_1\leq r_1+r_2<1$, we obtain that

$$R_2\geq 2\operatorname{arctanh}((E_D+1)r_1) -R_1 \geq $$
$$\geq 2(E_D+1)\operatorname{arctanh}(r_1) -R_1 = (E_D+1)R_1-R_1 =E_DR_1.$$

This shows that $R_2\geq E_D R_1$.
Swapping the roles of $C_1$ and $C_2$ we also obtain that 
$R_1\geq E_D R_2.$
Combining the two inequalities,  one obtains the desired statement.
\endproof

%%%%%%%%%%%%%%%%%%%%%%%%%%%%%%%%%%%%%%%%%%%%%%%%%

% 
\section{Circle packing a finite triangulation with cones}\label{sec: compact conical}
% set up
\noindent Let $\bar S$ be a closed connected orientable surface of genus $g\geq 0$, $P=\{p_1,\dots,p_n\}\subseteq \bar S$ a finite set of points, and $S=\bar S \setminus P$. 
Let $\Theta=(\theta_1,\dots,\theta_n)\in [0,+\infty)^n$ be such that

\begin{equation}\label{eq: orbchar}
    % \orbchar S \Theta= 
    2\pi\chi(\bar S) + \sum_{i=1}^n (\theta_i-2\pi) <0.
\end{equation}

\begin{remark}[Uniformization with conical singularities]\label{rem: conical unif}
In every conformal class $X\in \teich {\bar S}$ there is a unique (possibly singular) hyperbolic metric $h_{X,\Theta}$ with cone angle $\theta_i$ on $p_i$ for all $i$; see \cite{MO88,TR91}. 
The case in which for all $i=1,\dots,n$ we have $\theta_i=2\pi$ corresponds to the (smooth) hyperbolic metric that uniformizes $X$.
The case in which for all $i=1,\dots,n$ we have $\theta_i=0$ corresponds to the (cusped) hyperbolic metric of finite volume that uniformizes the punctured Riemann surface $X\setminus P$.
\end{remark}

The goal of this section is to prove Theorem~\ref{thm:combinatorial conical uniformization}, which is a combinatorial analogue to the conical uniformization results mentioned in Remark~\ref{rem: conical unif}, in the sense that here we are prescribing a triangulation instead of prescribing a conformal class. This result is similar to Theorem 4.1 of \cite{B93} in the case where the boundary is empty and replaces condition (i) of that theorem with the more easily checked condition \eqref{eq: orbchar} of the present paper.
Note that we assume cone angles are strictly less than $\pi$.

\begin{theorem}\label{thm:combinatorial conical uniformization}
    Let $\Theta=(\theta_1,\dots,\theta_n)\in [0,\pi)^n$ satisfy \eqref{eq: orbchar}, and let $\widehat K$ be a triangulation of $\bar S$ with $p_i\in \widehat K^{(0)}$.
    Then there exists a hyperbolic metric $h_{\widehat K,\Theta}$ on $\bar S$ with  cone angle $\theta_i$ at $p_i$ for all $i=1,\dots,n$, which supports a circle packing $\mathcal P_{\widehat K,\Theta}$ in the combinatorics of $\widehat K$.
    Moreover, the metric and the packing are uniquely determined up to isometry.
\end{theorem}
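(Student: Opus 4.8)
The plan is to recast the problem as one of prescribing angle sums by means of a label, and then to invoke the existence machinery of \cite[Theorem 4.1]{B93}. Define the target angle function $\Phi\colon \widehat K^{(0)}\to [0,+\infty)$ by setting $\Phi(p_i)=\theta_i$ for each puncture $p_i$ and $\Phi(v)=2\pi$ for every other vertex $v$. By the construction recalled in \S\ref{sec:labels}, any label $\widehat R\colon \widehat K^{(0)}\to(0,+\infty]$ whose angle sum function satisfies $\theta_{\widehat R}=\Phi$ produces a hyperbolic metric on $\bar S$ with a cone point of angle $\theta_i$ at each $p_i$ (a cusp when $\theta_i=0$), smooth at every other vertex, together with a circle packing in the combinatorics of $\widehat K$. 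Thus it suffices to produce such a label.

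First I would set up the global compatibility condition and check that it coincides with \eqref{eq: orbchar}. Summing the Gauss--Bonnet relation $\operatorname{Area}(F)=\pi-(\text{sum of angles of }F)$ over all faces $F$ of $\widehat K$ shows that a label with $\theta_{\widehat R}=\Phi$ can exist only if the total area $\pi\lvert \widehat K^{(2)}\rvert-\sum_{v}\Phi(v)$ is positive. Using $V-E+F=\chi(\bar S)$ together with $2E=3F$ gives $F=2V-2\chi(\bar S)$, and substituting $\sum_v\Phi(v)=\sum_{i=1}^n\theta_i+2\pi(V-n)$ turns the positivity of the area into
\[
2\pi\chi(\bar S)+\sum_{i=1}^n(\theta_i-2\pi)<0,
\]
which is exactly \eqref{eq: orbchar}. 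This computation is the content of the assertion that \eqref{eq: orbchar} replaces condition (i) of \cite[Theorem 4.1]{B93}: the transparent Gauss--Bonnet inequality is equivalent to the global area condition required by that theorem in the case of empty boundary.

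With the hypotheses in place, I would apply \cite[Theorem 4.1]{B93} (the monotone Perron method for labelled complexes) to obtain a label $\widehat R$ realizing $\theta_{\widehat R}=\Phi$, with $\widehat R(p_i)=+\infty$ precisely when $\theta_i=0$. The method exploits the monotonicity of the angle sum at a vertex as a function of the labels---increasing the radius at a vertex decreases its own angle sum while increasing those of its neighbors---to build a maximal subsolution whose angle sums match the target. Feeding $\widehat R$ back into the construction of \S\ref{sec:labels} then yields the metric $h_{\widehat K,\Theta}$ and the packing $\mathcal P_{\widehat K,\Theta}$.

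The main obstacle is not the existence machinery itself but the bookkeeping identifying \eqref{eq: orbchar} with the precise compatibility condition of \cite[Theorem 4.1]{B93}, and the verification that no further local or subcomplex obstruction survives once the boundary is empty and the prescribed angles are recorded at honest vertices of $\widehat K$. I would also pay close attention to the cusp case $\theta_i=0$, where the label takes the value $+\infty$ and the corresponding circle develops to a horocycle rather than a genuine hyperbolic circle; here one must check that the limiting angle sum behaves as required so that the Perron construction still converges to an admissible label.
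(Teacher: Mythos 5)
There is a genuine gap, and it sits exactly where you deferred the work to ``bookkeeping.'' The hypothesis of \cite[Theorem 4.1]{B93} is not the single global positivity condition you verify; it is a \emph{family} of inequalities, one for every collection of vertices $V\subseteq \widehat K^{(0)}$ spanning an edge-path connected subcomplex: in the paper's notation, $\pi F_V - \sum_{v\in V}\phi_\Theta(v)>0$, where $F_V$ counts the $2$-faces meeting $V$. Your Gauss--Bonnet computation (which is correct, and does match the paper's remark that \eqref{eq: orbchar} ``replaces'' condition (i) of \cite{B93}) only establishes the case $V=\widehat K^{(0)}$. The word ``replaces'' means that \eqref{eq: orbchar} is a more easily checked condition that \emph{implies} the full family of subcomplex inequalities --- proving that implication is the substance of the theorem, not an equivalence you can read off from Euler's formula. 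Your closing paragraph acknowledges that ``no further local or subcomplex obstruction survives'' must be verified, but you never verify it, and it does not follow from the global inequality by any routine combinatorial argument.

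The paper's proof devotes Lemma~\ref{lem: combinatorial invariant} precisely to this point, and the argument there is genuinely geometric rather than combinatorial: it invokes the analytic conical uniformization theorem of McOwen--Troyanov (Remark~\ref{rem: conical unif}) to produce \emph{some} hyperbolic metric $h_{X,\Theta}$ with the prescribed cone angles, isotopes $\widehat K$ to a geodesic triangulation in that metric, and then uses positivity of the hyperbolic area of the subcomplex $X_V$ together with the triangle area formula $\pi-\sum \theta_F(v)$ to deduce $\pi F_V>\sum_{v\in V}\phi_\Theta(v)$ for every $V$ simultaneously. (To see that the subcomplex conditions carry real content: for two adjacent degree-$3$ vertices one gets $F_V=4$ and $\sum_{v\in V}\phi_\Theta(v)=4\pi$, so the inequality fails --- such configurations are ruled out only because they force the surface to be a sphere, which is excluded by \eqref{eq: orbchar}; nothing of this sort is visible from the global count alone.) To repair your proof you would need to supply this lemma, or an alternative derivation of the subcomplex inequalities from \eqref{eq: orbchar}; as written, the appeal to \cite[Theorem 4.1]{B93} is not justified.
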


\begin{remark}
    The metrics $h_{\widehat K,\Theta}$ and $ h_{X,\Theta}$ are a priori unrelated.
    In particular, it is not clear whether $ h_{X,\Theta}$ supports a circle packing in the combinatorics of $\widehat K$.
    However, if the conformal structure $X_{\widehat K}$ underlying $h_{\widehat K,\Theta}$ happens to coincide with $X$, then the two metrics are the same.
    See \S\ref{sec: prescribe conformal} for examples in which the complex structure can be prescribed and the two metrics coincide.
\end{remark}
 
In order to prove Theorem~\ref{thm:combinatorial conical uniformization}, we need to establish a combinatorial relationship between the triangulation and the cone angles.
(Note that throughout the paper we are assuming that triangulations are simplicial. This is especially relevant in the next two proofs.)
Let $\widehat K$ be any triangulation of the closed surface $\bar S$, with $p_i\in \widehat K^{(0)}$.
The choice of $\Theta$ gives rise to a natural labelling of the vertices of $K$, which can be encoded in the function 
$$\phi_\Theta:\widehat K^{(0)}\to [0,+\infty), \quad \phi_\Theta(v)=  \left\lbrace  \begin{array}{ll}
 \theta_i    & \textrm{ if } v=p_i  \\
  2\pi   &  \textrm{ if } v\neq p_i
\end{array} \right.$$

Given a collection of vertices $V\subseteq \widehat K^{(0)}$ let us consider $\mathcal F_V=\{ F\in \widehat K^{(2)} \ | \ F^{(0)}\cap V \neq \varnothing\}$, i.e., the collection of $2$-faces of $\widehat K$ with at least a vertex in $V$. 
 Let $F_V=|\mathcal F_V|$ be the number of such faces, and let $X_V=\cup_{F\in \mathcal F_V}F$ the subcomplex of $\widehat K$ induced by them.

\begin{lemma}\label{lem: combinatorial invariant}
In the above notation, if $\theta_i<\pi$ for all $i=1,\dots,n$ and $V\subseteq \widehat K^{(0)}$,
then we have that 
$$\pi F_V- \sum_{v\in V}\phi_\Theta(v)>0.$$
\end{lemma}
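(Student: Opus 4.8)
The plan is to read $\pi F_V-\sum_{v\in V}\phi_\Theta(v)$ as a combinatorial Gauss--Bonnet expression for the subsurface $X_V=\bigcup_{F\in\mathcal F_V}F$ spanned by the faces meeting $V$, and then to bound it one connected component at a time. First I would record that every $v\in V$ has its entire closed star inside $X_V$, so the vertices of $V$ are \emph{interior} vertices of $X_V$; in particular each cone point $p_i\in V$ is interior and retains its prescribed angle there. Because two fans of faces meeting $V$ can touch at a single non-$V$ vertex, $X_V$ need not be a manifold, so I would resolve such pinch points by duplicating the offending boundary vertices. This leaves the triangles, the set $V$, and hence the value of $\pi F_V-\sum_{v\in V}\phi_\Theta(v)$ unchanged, while turning $X_V$ into a genuine triangulated surface with boundary $\widetilde X_V$.

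Next I would apply the Euler relation for a triangulated surface with boundary, $F_V=2V_{\mathrm{int}}+V_\partial-2\chi(\widetilde X_V)$, where $V_{\mathrm{int}},V_\partial$ count interior and boundary vertices. Substituting this and writing $\kappa_i=2\pi-\theta_i$ for the cone deficits rewrites the target inequality as
\[
2\pi\bigl(V_{\mathrm{int}}-|V|\bigr)+\pi V_\partial-2\pi\chi(\widetilde X_V)+\sum_{p_i\in V}\kappa_i>0.
\]
The first term is nonnegative since $V$ lies among the interior vertices. For the full vertex set $V=\widehat K^{(0)}$ one has $\widetilde X_V=\bar S$ and $V_\partial=0$, and the left-hand side becomes exactly $-\orbchar{S}{\Theta}$, which is positive by hypothesis \eqref{eq: orbchar}; this is the only place the Gauss--Bonnet inequality is used.

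For a proper subset there are two cases. If $\widetilde X_V$ is still closed it must equal the connected surface $\bar S$, so $V_{\mathrm{int}}-|V|=|W|$ with $W=\widehat K^{(0)}\setminus V$, and the expression equals $-\orbchar{S}{\Theta}+2\pi|W|-\sum_{p_i\in W}\kappa_i$; using $0\le\theta_i$ (hence $\kappa_i\le 2\pi$) the last two terms together are $\ge 0$, and positivity again follows from \eqref{eq: orbchar}. Otherwise every component of $\widetilde X_V$ has nonempty boundary, and I would argue componentwise: each component contains a vertex of $V$, hence at least three triangles, so its boundary is a simplicial cycle with $V_\partial\ge 3$, while $\chi\le 1$; thus $\pi V_\partial-2\pi\chi\ge 3\pi-2\pi=\pi>0$, and the remaining terms are nonnegative provided $\theta_i\le 2\pi$ (so $\kappa_i\ge0$). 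Summing over components gives the strict inequality.

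The main obstacle is the passage to $\widetilde X_V$: controlling the non-manifold points of $X_V$ and keeping careful track of which cone points lie in $V$ as opposed to its complement. Once $\widetilde X_V$ is a bona fide surface with boundary, the statement is a clean consequence of the triangulation degree bound (equivalently $V_\partial\ge3$ on a boundary piece), the estimate $\chi\le 1$ for a proper piece, and the sign conditions $0\le\theta_i\le 2\pi$. I would emphasize that it is precisely these pointwise bounds on the cone angles, and not merely the global inequality \eqref{eq: orbchar}, that make the per-subset statement hold.
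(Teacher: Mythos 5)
Your argument is correct on the range it covers, and it is genuinely different from the paper's. The paper's proof is soft and geometric: it takes the conical hyperbolic metric $h_{X,\Theta}$ furnished by uniformization (Remark~\ref{rem: conical unif}), realizes $\widehat K$ geodesically with respect to it via \cite[Lemma C.4.3]{BP92}, and reads the inequality off from the positivity of the areas $\pi-\sum\text{(angles)}$ of the hyperbolic triangles of $X_V$, using that $\st{v}{\widehat K}\subseteq \mathcal F_V$ for $v\in V$ so that the full angle $\phi_\Theta(v)$ is collected. You replace all of this by an elementary Euler-characteristic count. I checked the pieces: resolving pinch points is legitimate (vertices of $V$ have their whole closed star in $X_V$, so they are manifold interior points and are never duplicated, and the quantity $\pi F_V-\sum_{v\in V}\phi_\Theta(v)$ is untouched); the relation $F_V=2V_{\mathrm{int}}+V_\partial-2\chi$ is the standard count; your dichotomy is exact, since a boundaryless component of $\widetilde X_V$ is closed under edge-adjacency and hence is all of $\bar S$; and $V_\partial\ge 3$ per boundary component and $\chi\le 1$ give the per-component bound $\pi V_\partial-2\pi\chi\ge\pi$. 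What your route buys is self-containedness: the paper's proof rests on two black boxes (McOwen--Troyanov uniformization, and geodesic realizability of a prescribed triangulation on a cone surface, for which the cited lemma of \cite{BP92} is really a statement about smooth hyperbolic surfaces), whereas yours needs nothing beyond counting.

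The one discrepancy is the range of angles, and you were right to flag it. The standing hypothesis of \S\ref{sec: compact conical} allows any $\Theta\in[0,+\infty)^n$ satisfying \eqref{eq: orbchar}, while your boundary case needs $\kappa_i=2\pi-\theta_i\ge 0$, i.e.\ $\theta_i\le 2\pi$. This is not a repairable defect of your argument: the lemma is actually \emph{false} for large angles. Take $\bar S$ of genus $2$, $n=1$, $\theta_1=5\pi$ (so $\orbchar{S}{\Theta}=-4\pi+3\pi<0$ and \eqref{eq: orbchar} holds), and a simplicial triangulation in which $p_1$ has degree $3$ (e.g.\ a stellar subdivision point); then for $V=\{p_1\}$ one has $\pi F_V-\phi_\Theta(p_1)=3\pi-5\pi<0$. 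The paper's own proof breaks in this situation precisely at the geodesic-realization step, since three genuine hyperbolic triangles around $p_1$ would force total angle $<3\pi$ there; for the same reason the conclusion of Theorem~\ref{thm:combinatorial conical uniformization} itself cannot hold for such pairs $(\widehat K,\Theta)$. So your proof establishes the statement on exactly the range where it can hold for an arbitrary triangulation, and that range suffices for the paper's main convergence result (Theorem~\ref{thm: main convergence} only uses $\Theta\in[0,\pi)^n$); but for the lemma and Theorem~\ref{thm:combinatorial conical uniformization} as literally stated, either $\Theta$ should be restricted to $[0,2\pi]^n$ or a hypothesis relating $\theta_i$ to the degree of $p_i$ must be added.
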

\proof 
Let $X$ be a conformal class on $\bar S$ and let $h_{X,\Theta}$ be the unique hyperbolic metric of angle $\theta_i$ at $p_i$ in this conformal class $X$; see Remark~\ref{rem: conical unif}.  
We claim that up to isotopy one can assume that $\widehat K$ is a geodesic triangulation with respect to this metric. 
To see this, isotope every edge to the geodesic arc between its endpoints. 
Since  $\theta_i<\pi$, this geodesic  is simple and avoids cone points (possibly except at its endpoints); see \cite[Theorem 5.1]{TWZ06}.
Every triangle of $K$ gets pulled to a geodesic triangle.
We need to check that no triangles  overlap or degenerate.
If there are no singular points, this was done in \cite{dV91}, and the arguments therein can be extended to our case as follows.
To rule out the possibility of overlaps, one uses the Gauss-Bonnet formula for conical metrics to show that, for each vertex $v$, the value of $\phi_\Theta(v)$ and the sum of the angles around $v$ must coincide (if a triangle $F$ degenerates, then the angle at $v$ in $F$ is $0$ or $\pi$); compare  \cite[\S2.a, \S2.d]{dV91}.
Next, since cone angles are less than $\pi$,  if two adjacent triangles degenerate, then they are aligned.
As in \cite[\S2.c]{dV91}, it follows that a maximal subcomplex of $K$ consisting of degenerating triangles is either a disk that degenerates to a geodesic segment, or an annulus that degenerates to a closed geodesic. But  the Gauss-Bonnet formula shows that both cases are absurd.

Now, if $\widehat K$ is a geodesic triangulation with respect to the metric $h_{X,\Theta}$, then the area of the subcomplex $X_V$ with respect to this metric can be computed as
\begin{equation}
    \area{X_V}{X}{\Theta} =\sum_{F\in \mathcal F_V}\area{F}{X}{\Theta}.
\end{equation}

Since $h_{X,\Theta}$ is a hyperbolic metric, each face $F$ is isometric to a hyperbolic triangle, and its area is given by
\begin{equation}
    \area{F}{X}{\Theta}=\pi-\sum_{v\in F^{(0)}}\theta_F(v),
\end{equation}
where $\theta_F(v)$ is the angle at the vertex $v$ inside $F$ with respect to the metric $h_{X,\Theta}$.
(Note that the value of $\theta_F(v)$ depends on the chosen $X$.)
It follows that
\begin{equation}
    0< \area{X_V}{X}{\Theta} = \sum_{F\in \mathcal F_V}  \left(\pi-\sum_{v\in F^{(0)}}\theta_F(v)\right ) = \pi F_V - \sum_{F\in \mathcal F_V} \sum_{v\in F^{(0)}} \theta_F(v).
\end{equation}

Now notice that if $v\in V$, then by definition $v$ is a vertex of a face of $\mathcal F_V$, and actually all the faces of $\st{v}{\widehat K}$ are in $\mathcal F_V$.
 So, we have that

\begin{equation}
    \sum_{F\in \mathcal F_V} \sum_{v\in F^{(0)}} \theta_F(v) \geq \sum_{v\in V} \sum_{F\in \st{v}{\widehat K}} \theta_F(v).
\end{equation}

Since the star of $v$ in $\widehat K$ provides a full neighborhood of $v$ in $\bar S$, the term
$\sum_{F\in \st{v}{\widehat K}} \theta_F(v)$  is just the total angle at $v$ with respect to the metric $h_{X,\Theta}$.
By definition of this metric, the total angle at $v$ is either $2\pi$ if $v$ is a smooth point, or $\theta_i$ if $v=p_i$.
This coincides with the definition of $\phi_\Theta$, so
\begin{equation}
    \sum_{F\in \st{v}{\widehat K}} \theta_F(v) = \phi_\Theta (v).
\end{equation}

By putting everything together we obtain
\begin{equation}
    0< \area{X_V}{X}{\Theta} =   \pi F_V - \sum_{F\in \mathcal F_V} \sum_{v\in F^{(0)}} \theta_F(v) \leq
    \end{equation}
\begin{equation*}
    \leq  \pi F_V - \sum_{v\in V} \sum_{F\in \st{v}{\widehat K}} \theta_F(v) = \pi F_V -\sum_{v\in V}\phi_\Theta (v).
\end{equation*}
\endproof

\begin{proof}[Proof of Theorem~\ref{thm:combinatorial conical uniformization}]
Let $V\subseteq \widehat K^{(0)}$ be a collection of vertices that spans an edge-path connected subcomplex of $\widehat K$.
It follows from  Lemma~\ref{lem: combinatorial invariant} that 
$$\pi F_V- \sum_{v\in V}\phi_\Theta(v)>0.$$
By \cite[Theorem 4.1]{B93} there exists a uniquely determined label $\widehat R:\widehat K^{(0)}\to (0,+\infty ]$ whose associated angle sum function is  $\theta_{\widehat R}=\phi_\Theta$.
For any vertex $v\in \widehat K^{(0)}\setminus P$ we have $\phi_\Theta(v) =2\pi \neq 0$, and therefore $\widehat R(v)<+\infty$.
In particular, via the construction in \S\ref{sec:labels}, the label $\widehat R$ provides a hyperbolic metric on $\bar S$ with a cone of angle $\theta_i=\phi_\Theta(p_i)$ at $p_i$, as well as a circle packing on $\bar S$ in the combinatorics of $\widehat K$. (Recall that by a cone of angle $\theta_i=0$ we mean a cusp.)
\end{proof}

\begin{remark}[Invariance under group action]\label{rem:invariance}
    Suppose a finite group $G$ acts on the surface $S$ by simplicial automorphisms of $\widehat K$ and by preserving the angle vector $\Theta$, in the sense that if $gp_i=p_j$ for some $g\in G$, then $\theta_i=\theta_j$.
    It follows from the uniqueness of the label $\widehat R$ that this label is $G$-invariant.
    Namely, $\widehat R$ must coincide with its average
    $\widehat R_G(v)=\frac{1}{|G|}\sum_{g\in G} \widehat R(gv)$
    under the action of $G$.
    Therefore, the associated hyperbolic metric $h_{\widehat K,\Theta}$ in Theorem~\ref{thm:combinatorial conical uniformization} is $G$-invariant, i.e., $G$ acts by isometries of $h_{\widehat K,\Theta}$.
\end{remark}

%%%%%%%%%%%%%%%%%%

\section{Circle packing an infinite triangulation with cones}\label{sec: circle pack infinite}
\noindent Let $K$ be a triangulation of the non-compact surface $S=\bar S\setminus P$.
A \textit{peripheral system for $K$ with respect to $P$} is a collection $C=\{c_1,\dots,c_n\}$ of disjoint simple peripheral edge-loops $c_i$ in $K^{(1)}$ such that $c_i$ encloses $p_i$.
Let $D_i$ be the open punctured disk around $p_i$  bounded by $c_i$. 
Given a peripheral system $C$ for $K$ with respect to $P$, we define a triangulation of the compact surface $\bar S$, called the \textit{coned-off triangulation $\widehat K(C)$ relative to $C$}, as follows.
The triangulation  $\widehat K(C)$ agrees with $K$ outside the disks $D_i$.
Inside a disk $D_i$ we modify $K$ by  collapsing all vertices contained in $D_i$ to $p_i$, as shown in Figure~\ref{fig:coneoff}.
(Notice that $\widehat K(C)$ can also be seen as a triangulation of the non-compact surface $S$ with some ideal vertices at the points of $P$.)

\begin{figure}[h]
    \centering
    \includegraphics[width=.75\textwidth]{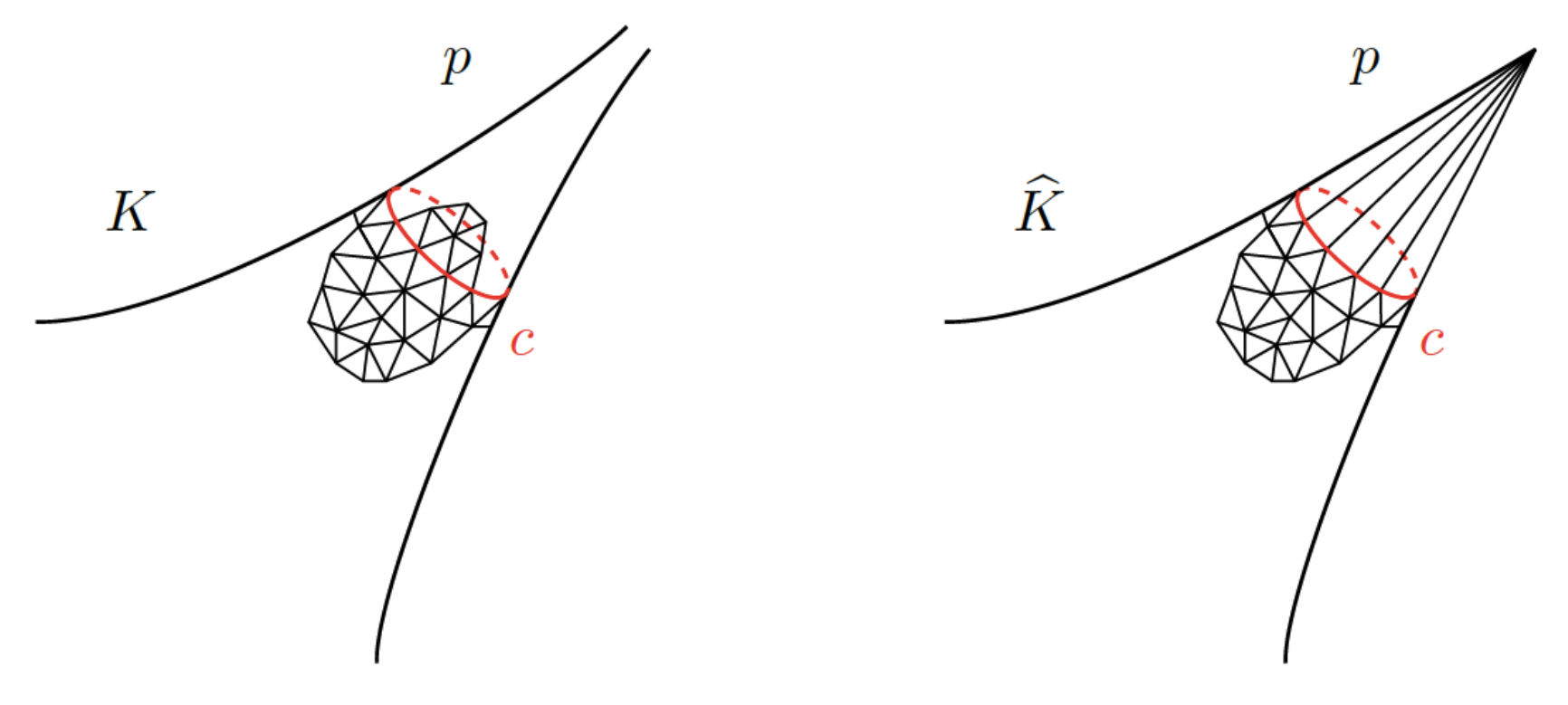}
    \caption{The coned-off triangulation around a puncture.}
    \label{fig:coneoff}
\end{figure} 

Since $\Theta$  satisfies \eqref{eq: orbchar}, by Theorem~\ref{thm:combinatorial conical uniformization} there exist a unique hyperbolic metric $h_{\widehat K(C),\Theta}$ on $\bar S$ with cone points of angle $\theta_i$ on $p_i$, and also a unique circle packing $\mathcal P_{\widehat K(C),\Theta}$ in the combinatorics of $\widehat K(C)$, with $n$ circles centered at the points $p_i$. 

Now, let us fix an exhausting sequence of peripheral systems $C^m$, i.e. $C^m=\{c_1^m,\dots,c_n^m\}$, $c^m_i\underset{m\to +\infty}{\longrightarrow} p_i$ for $i=1,\dots,n$.
This means that if $D_i^m$ denotes the open punctured disk around $p_i$ bounded by $c_i^m$, then for every open  neighborhood $U$ of $p_i$ in $\bar S$ there exists $m\in \nn$ such that $D_i^m\subseteq U$.
Let $\widehat K_m=\widehat K(C^m)$ be the coned-off triangulation relative to $C^m$.
We denote by $h_m=h_{\widehat K_m,\Theta}$  the conical hyperbolic metric
and by  $\mathcal P_m=\mathcal P_{\widehat K_m,\Theta}$ the circle packing 
determined by Theorem~\ref{thm:combinatorial conical uniformization}.
The goal of this section is to prove the following statement.
It is the analogue of Theorem~\ref{thm:combinatorial conical uniformization} in the case of an infinite triangulation. 
Note that here we restrict to angles that are smaller than $\pi$.

\begin{theorem}\label{thm: main convergence}
Let $\Theta=(\theta_1,\dots,\theta_n)\in [0,\pi)^n$ satisfy \eqref{eq: orbchar}.
Let $K$ be a triangulation of $S=\bar S\setminus P$ of degree bounded by $d$. 
Then (up to passing to subsequences) the sequence of conical hyperbolic metrics $h_m$ converges to a hyperbolic metric   $h_{K,\Theta} $ on $\bar S$ with  cone angle $\theta_i$ at $p_i$ for all $i=1,\dots,n$, which supports a circle packing $\mathcal P_{K,\Theta} $ of $S$ in the combinatorics of $K$. 
\end{theorem}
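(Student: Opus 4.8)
The plan is to obtain $h_{K,\Theta}$ and $\mathcal P_{K,\Theta}$ as a subsequential limit of the finite data $h_m,\mathcal P_m$ furnished by Theorem~\ref{thm:combinatorial conical uniformization}, controlling the geometry through the estimates of Proposition~\ref{prop: radii estimates}. Write $R_m(v)$ for the hyperbolic radius of the circle of $\mathcal P_m$ centered at a vertex $v$. The first point is that these radii are uniformly controlled on the ``bulk''. Since the peripheral systems $C^m$ exhaust, any fixed finite subcomplex $L\subseteq K$ lies, for all large $m$, in the region where $\widehat K_m$ coincides with $K$; there every circle sits in a flower of degree at most $d$, so part~\eqref{item:upper} of Proposition~\ref{prop: radii estimates} bounds its radius by $H_B$ and part~\eqref{item:comparable} makes tangent radii comparable with factor $E_D$, uniformly in $m$. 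Moreover, if $C_2$ is the circle centered at a puncture $p_i$ and $C_1$ is a bulk petal of it, then taking $C_1$ as the center of the flower in part~\eqref{item:comparable} still gives $E_D\le R_m(p_i)/R_m(C_1)\le 1/E_D$, so cone circles are comparable to their bulk neighbours as well. (One may, if preferred, choose each $c_i^m$ of bounded combinatorial length, which parabolic ends allow, so that $\widehat K_m$ has uniformly bounded degree outright.)

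First I would fix a base vertex $v_0\in K^{(0)}$ and normalize the developing maps of the $\mathcal P_m$ so that the circle at $v_0$ is a fixed circle centred at the origin of the disk model of $\hh^2$, with a chosen petal tangent in a fixed direction. Exhausting $K$ by finite subcomplexes $L_1\subseteq L_2\subseteq\cdots$ and diagonalizing, I would pass to a subsequence along which $R_m(v)\to R_\infty(v)\in[0,H_B]$ for every vertex $v$ and the developed circles converge. This produces a limit label $R_\infty$ on $K^{(0)}$ together with a limiting configuration of circles in $\hh^2$.

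The heart of the matter, and the step I expect to be the main obstacle, is to rule out degeneration, i.e.\ to show $R_\infty(v)>0$ for all $v$. Comparability gives a clean dichotomy: if $R_\infty(v)=0$ and $w$ is adjacent to $v$, then $R_m(w)\le R_m(v)/E_D$ forces $R_\infty(w)=0$, so the vanishing set $\{v:R_\infty(v)=0\}$ is closed under adjacency and, $K$ being connected, is either empty or all of $K^{(0)}$. It remains to exclude total collapse. Here I would argue by Gauss--Bonnet and area: each $h_m$ has fixed total area $-\orbchar S\Theta>0$ by \eqref{eq: orbchar}, and the edges of the geodesic triangulation are geodesics, so the area of the region bounded by any fixed peripheral loop $\gamma_i$ and containing $p_i$ equals its total boundary turning minus $\theta_i$. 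If every radius collapsed, these bounding loops would shrink onto the cone points, their turning would tend to $\theta_i$, and the area of $\bar S$ would tend to $0$ --- contradicting \eqref{eq: orbchar}. Hence the vanishing set is empty and $R_\infty$ is a genuine positive label. (This is precisely where the control at the cone points, and the restriction $\theta_i<\pi$, enter.)

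Granting positivity, the angle sum of $R_\infty$ at every $v\in K^{(0)}$ is the limit of the angle sums of the $\widehat K_m$-labels at $v$, each equal to $2\pi$; thus $R_\infty$ defines a hyperbolic metric $h_{K,\Theta}$ on $S$, smooth away from $P$, carrying a circle packing $\mathcal P_{K,\Theta}$ in the combinatorics of $K$, whose circles shrink and accumulate at the punctures. To finish I would complete this metric across $P$: the parabolic-ends hypothesis guarantees each end is conformally a puncture, so the completion adds a single point $p_i$, and it remains only to check that its cone angle is exactly $\theta_i$. For this, note that the circle of $\mathcal P_m$ at $p_i$ is comparable to its bulk petals, which recede to $p_i$ and shrink, so $R_m(p_i)\to 0$ and the area enclosed by a small peripheral disk at $p_i$ tends to $0$; applying Gauss--Bonnet to a fixed peripheral disk for both $h_m$ and the limit, the interior and boundary terms converge while each $h_m$ carries the fixed cone defect $2\pi-\theta_i$ at $p_i$, so the limit carries the same defect and $\theta_i^\infty=\theta_i$. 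Convergence of the labels on compacta then upgrades to $h_m\to h_{K,\Theta}$, as desired.
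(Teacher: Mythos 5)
Your overall scheme---extracting a subsequential limit of the labels furnished by Theorem~\ref{thm:combinatorial conical uniformization}, using Proposition~\ref{prop: radii estimates} for the uniform upper bound and for comparability of tangent radii, and observing that the vanishing set $\{v : R_\infty(v)=0\}$ is closed under adjacency and hence empty or all of $K^{(0)}$---is exactly the paper's. The divergence is at the step you yourself flag as the heart of the matter, ruling out total collapse, and there your argument has a genuine gap. It is true that each $h_m$ has fixed total area $-\orbchar{S}{\Theta}>0$, and that total collapse forces the area of every fixed finite subcomplex (the bulk) to tend to $0$. But this does not make the total area tend to $0$: the number of triangles inside the peripheral disks grows with $m$, so the area can a priori concentrate there. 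Your claim that the turning of a fixed peripheral loop $\gamma_i$ tends to $\theta_i$ is, by the very Gauss--Bonnet identity you quote ($\operatorname{Area}(D_i)=\text{turning}(\gamma_i)-\theta_i$), literally equivalent to the statement that the disk areas tend to $0$, i.e.\ to the conclusion you want; as written the argument is circular. Nothing you wrote excludes the Gauss--Bonnet-consistent scenario in which the bulk collapses, the turnings stay bounded away from $\theta_i$ (they are only bounded above by $\pi$ times the fixed number of corners), and all of the fixed positive area hides inside the peripheral disks. A symptom of the gap: you announce that this is precisely where $\theta_i<\pi$ enters, yet no step of your argument actually uses that hypothesis.

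The gap is repairable, but only with a genuine geometric input that you do not supply. One option in the spirit of your argument is an isoperimetric inequality of Alexandrov--Bol type for disks of curvature $\le -1$ containing a single cone point of angle $\theta_i$, namely $L^2 \ge A^2 + 2\theta_i A$; then boundary length $\to 0$ forces disk area $\to 0$ and your contradiction stands (interestingly, this route would not need $\theta_i<\pi$). The paper argues differently: total collapse makes the $h_m$-lengths of two fixed essential simple edge-loops $\alpha,\beta$ meeting transversely tend to $0$, hence also the lengths of their geodesic representatives (which are simple, transverse, and avoid cone points because all angles are $<\pi$, by \cite{TWZ06}), and this contradicts the collar lemma for cone surfaces with angles $<\pi$ of \cite{DP07}, which gives $\sinh(\ell_m(\alpha_m)/2)\,\sinh(\ell_m(\beta_m)/2)\ge\cos(\max\Theta/2)>0$; the thrice-punctured sphere, which carries no such pair of loops, is handled separately since there the $h_m$ are constant. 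That is where $\theta_i<\pi$ genuinely enters. Two smaller loose ends in your write-up: the claim $R_m(p_i)\to 0$ requires uniform control over the moving petal vertices, not just pointwise convergence of the labels; and your closing Gauss--Bonnet limit presupposes both that the completion of the limit metric at $p_i$ is conical and that the areas of the infinitely-triangulated peripheral disks converge, neither of which is justified.
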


Note that the packing fills the surface if $K$ has parabolic ends; see \S\ref{sec:triangulations}.

\begin{remark}[Thrice-punctured sphere]\label{rem: 3p sphere conformal}
When $S$ is a thrice-punctured sphere, there is a unique conformal structure, and the sequence of metrics is actually constant, since the cone angles are fixed.
In this case there is no need to assume $\theta_i<\pi$.
(Any of these hyperbolic metrics is obtained by a pillow-case construction on a hyperbolic triangular membrane; see for instance \cite{BBCR21}. This is consistent with the fact that  \eqref{eq: orbchar} implies that $\theta_1+\theta_2+\theta_3 <2\pi$.)
One still needs to prove that this metric admits a circle packing in the combinatorics of $K$. 
This does not require $\theta_i<\pi$ either.
Indeed, as the following discussion will show, as soon as one knows that the sequence of metrics converges, the limit metric will support a suitable circle packing essentially by construction: it is defined by a label that is a limit of labels.
\end{remark}

\begin{remark}[Convergence]\label{rem:convergence}
    The proof of Theorem~\ref{thm: main convergence} that we provide below makes uses of a compactness argument, hence the need to extract a subsequence. 
    A priori, it is possible that the limit is not unique.
    If one could show that the sequence of underlying conformal classes is Cauchy, then one would get a unique conformal class in the limit, hence a unique limiting metric by Remark~\ref{rem: conical unif}.    
    In \S\ref{sec: prescribe conformal} we show how to prescribe the conformal class, and get uniqueness, when $K$ and $\Theta$ are invariant under a sufficiently large group of automorphisms (e.g, in the  case of Hurwitz surfaces).
    We conjecture that uniqueness should hold in general, as long as $K$ has parabolic ends.
\end{remark}

The proof of Theorem~\ref{thm: main convergence} in the general case is based on a ring lemma for circle packings from \cite{RS87,HR93} and a collar lemma for hyperbolic surfaces with small cones from \cite{DP07}.

We start by considering the following.
Let $K_m$ be the simplicial complex obtained from $\widehat K_m$ by removing the open stars of the vertices in $P$ (i.e., the vertices in $P$ and edges adjacent to them). 
Note that $K_m$ naturally identifies with a compact subcomplex of both $\widehat K_m$ and $K$.
Moreover, since we are working with an exhausting sequence of peripheral systems, every vertex of $K$ is eventually a vertex of $K_m$.

By the results in \S\ref{sec: compact conical}, for each $m$ we have a label
$\widehat R_m :\widehat K_m^{(0)}\to (0,+\infty]$.
Note that $\widehat R_m$ can take the value $+\infty$ only at points $p_i\in P$ where $\theta_i=0$.
So, we can restrict $\widehat R_m$ to a bounded function on $K_m$ and then extend this to the function on $K$ given by
$$R_m: K^{(0)}\to [0,+\infty), \quad R_m(v)=  \left\lbrace  \begin{array}{ll}
  \widehat R_m (v)   & \textrm{ if } v\in  K_m^{(0)}  \\
  0   &  \textrm{ if } v \not\in K_m^{(0)}
\end{array} \right.$$
By definition, $R_m$ is a non-negative and bounded function, which agrees with $\widehat R_m$ on the compact subcomplex $K_m$, and vanishes elsewhere.
(Note that $R_m$ is not a label for $K$.)
Moreover, by \eqref{item:upper} in Proposition~\ref{prop: radii estimates}, the sequence of functions $R_m$ is uniformly bounded, as all radii $\widehat R_m (v)$ can be bounded above by a constant $H$ only depending on the angles $\Theta$ and the degree $d$ of $K$.
In particular, these labels live in a compact space of functions, so, up to passing to a subsequence, we can assume that $R_m: K^{(0)}\to [0,H]$ converges to a function
$$R:K^{(0)}\to [0,H], \quad R(v)=\lim_{m\to +\infty} R_m(v).$$
 
(Recall from Remark~\ref{rem:convergence} that this limit may not be unique in general, although it is in some special cases, see \S\ref{sec: prescribe conformal}.)
In order for this function to be considered  a legitimate label (hence define a packed conical hyperbolic metric as in \S\ref{sec:labels}) we need to check that $R>0$.
Note that the next statement does not depend on the choice of the subsequence.

%%%%%%%

\begin{proposition}
If $\theta_i<\pi$ for all $i=1,\dots,n$, then for all $v\in K^{(0)}$ we have $R(v)>0$.
\end{proposition}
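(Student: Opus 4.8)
The plan is to argue by contradiction, first reducing the statement to the non-vanishing of a \emph{single} radius and then ruling out a global collapse by means of the collar lemma. The starting observation is that positivity of $R$ propagates along edges. Since no vertex of $K$ lies at a puncture, every circle indexed by a vertex of $K^{(0)}$ is centered either at a regular point or, in the coned-off complexes, at a cone point, but never at a cusp; hence \eqref{item:comparable} of Proposition~\ref{prop: radii estimates} applies to every pair of tangent circles among them. Thus for adjacent $u,v\in K_m^{(0)}$ one has $E_D\,\widehat R_m(v)\le \widehat R_m(u)\le E_D^{-1}\widehat R_m(v)$, and passing to the limit gives $E_D\,R(v)\le R(u)\le E_D^{-1}R(v)$. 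In particular $R(u)=0$ iff $R(v)=0$ for adjacent $u,v$. Since $K$ is connected, every fixed vertex eventually lies in $K_m$, and the combinatorial distance between two fixed vertices in $\widehat K_m$ is bounded by their distance in $K$, it follows that either $R>0$ everywhere or $R\equiv 0$. So it suffices to exclude the case $R\equiv 0$, i.e.\ to produce one fixed vertex $v_0$ with $R(v_0)>0$.

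To set up the contradiction, suppose $R\equiv 0$, so that $\widehat R_m(v)\to 0$ for every fixed $v\in K^{(0)}$. Fix a puncture $p_i$ and the outermost peripheral loop $c_i^1$ of the chosen exhaustion: its vertices are fixed vertices of $K$, hence their radii tend to $0$. Here I would invoke the collar lemma of \cite{DP07}: because $\theta_i<\pi$, each metric $h_m$ admits around $p_i$ an embedded hyperbolic cone-neighborhood $B_i^m$ of radius $w=w(\theta_i)>0$ depending only on $\theta_i$, isometric to a standard cone of angle $\theta_i$ and of definite area $a(\theta_i)>0$. This is the one place where the hypothesis $\theta_i<\pi$ (rather than merely $\theta_i<2\pi$) is genuinely used; note that the flower at $p_i$ already has boundedly many petals for any $\theta_i<2\pi$, by \eqref{item:comparable} together with the subtended-angle estimate, so the sharper bound must come from the embeddedness of the collar.

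The goal of the final step is to extract from $B_i^m$ a circle of definite size. Inside $B_i^m$ the packing is a genuine bounded-degree packing of a \emph{fixed-size} hyperbolic cone; \eqref{item:comparable} keeps neighbouring radii within the fixed ratio $E_D$, while \eqref{item:upper} bounds all radii above by $H_B$. A chain of tangent circles running from the center $p_i$ out to $\partial B_i^m$ has geometric length at least $w$; were all radii along such a chain to tend to $0$, this length would tend to $0$, which is impossible once $w$ is fixed. Transporting the resulting lower bound to the core along a fixed edge-path of controlled length then gives $R(v_0)>0$ for a fixed vertex $v_0$, contradicting $R\equiv 0$ and completing the argument.

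The hardest point is exactly this last step. The ring lemma only controls ratios of \emph{adjacent} radii, and these bounds degrade geometrically along long chains; meanwhile the combinatorial depth of the packing near $p_i$ grows without bound as $m\to\infty$, so the collar's definite size must be converted into a uniform lower bound without paying an unbounded factor. I therefore expect the decisive input to be a quantitative use of the collar: comparing the definite embedded area $a(\theta_i)>0$ of the disjoint collars $B_i^m$ against the fixed total area $A=-\orbchar S\Theta$, so as to forbid a global collapse of the metrics rather than tracking a single chain. A secondary wrinkle, which the hypothesis $\theta_i<\pi$ does not remove, is the cusp case $\theta_i=0$: there the central circle develops to a horocycle and \eqref{item:comparable} fails at the center, so the flower estimate must be replaced by the horocyclic form of the ring lemma before the same collar argument is run.
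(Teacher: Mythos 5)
Your first step --- propagating vanishing along edges via \eqref{item:comparable} of Proposition~\ref{prop: radii estimates}, so that either $R>0$ everywhere or $R\equiv 0$ --- is exactly the paper's reduction, and it is correct. The gap is in how you try to exclude $R\equiv 0$, and it is a gap you half-acknowledge yourself. The hypothesis $R\equiv 0$ gives only \emph{pointwise} convergence: for each fixed vertex $v$ of $K$, $\widehat R_m(v)\to 0$ as $m\to\infty$. Your chain of tangent circles from $p_i$ out to $\partial B_i^m$ passes through vertices that enter the picture only at combinatorial depth comparable to $m$; for such a vertex $u$ the hypothesis says nothing about $\widehat R_m(u)$ at the stage $m$ when the chain actually uses it (it only says $\widehat R_{m'}(u)\to 0$ as $m'\to\infty$). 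Since, moreover, the number of circles in the chain is unbounded in $m$, the claim ``were all radii along such a chain to tend to $0$, this length would tend to $0$'' does not follow; under $R\equiv 0$ only the core of the surface collapses, while the geometry inside the peripheral disks is not controlled at all. Your fallback is no better: comparing the definite collar area $a(\theta_i)$ with the fixed total area $A=-\orbchar{S}{\Theta}$ yields only the harmless inequality $\sum_i a(\theta_i)\le A$; a collapsing core with area concentrating near the punctures violates nothing at the level of areas, so no contradiction can come from this comparison.

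The paper's decisive input is different and stays entirely in the core, where the hypothesis has force. Unless $S$ is a thrice-punctured sphere (handled separately: there the metrics $h_m$ are constant by conformal rigidity, see Remark~\ref{rem: 3p sphere conformal}), one can choose two \emph{fixed} essential simple edge-loops $\alpha,\beta$ in $K^{(1)}$ intersecting transversely. Their $h_m$-lengths are sums of radii of finitely many fixed vertices, hence tend to $0$ if $R\equiv 0$, and a fortiori so do the lengths of their geodesic representatives $\alpha_m,\beta_m$, which by \cite[Theorem 5.1]{TWZ06} are simple closed geodesics, intersect transversely, and avoid the cone points because all angles are $<\pi$. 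The collar lemma for cone surfaces with angles $<\pi$ \cite[Corollary 4]{DP07} then gives
$$\sinh\left(\frac{\ell_m(\alpha_m)}{2}\right)\cdot \sinh\left(\frac{\ell_m(\beta_m)}{2}\right)\geq \cos\left(\frac{\max\Theta}{2}\right)>0,$$
which is the contradiction. This is where $\theta_i<\pi$ is genuinely used --- for the behavior of geodesic representatives and for the positivity of the collar bound --- not for embedded cone-neighborhoods around the punctures. Note also that in this argument every circle involved is centered at a fixed vertex of $K$, never at a cusp or cone, so your ``secondary wrinkle'' about horocyclic flowers when $\theta_i=0$ never arises.
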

\begin{proof}
Let $v\in K^{(0)}$. 
Since $K$ is a triangulation of $S$, its vertices are not punctures, i.e., $v\not \in P$.
Notice that for $m$ large enough, we have $v\in \widehat K_m^{(0)}$ and $R_m(v)=\widehat R_m(v)>0$.

Assume by contradiction $R_m(v)\to 0$.
Let $w \in K^{(0)}$ be an adjacent vertex.
Without loss of generality, let $m$ be large enough so that  both $v$ and $w$ belong to $\widehat K_m$.
By Proposition~\ref{prop: radii estimates} we have that 
$$ E_D \leq  \frac{\widehat R_m(w)}{\widehat R_m(v)} \leq  \frac{1}{E_D},$$
Since $R_m(v)=\widehat R_m(v)$ and $R_m(v)\to 0$, we get $\widehat R_m(v)\to 0$
and therefore $R_m(w)=\widehat R_m(v)\to 0$ too.
Iterating the argument shows that $R_m(u)\to 0$ for all vertices $u\in K^{(0)}$, i.e., $R\equiv 0$.
We will show that this leads to a contradiction.

For an edge-path $\gamma$ in $K^{(1)}$ we let $|\gamma|$ be its combinatorial length (i.e., the number of edges) and $\ell_m(\gamma)$  be its length in the hyperbolic metric $h_m$.
Let $v_0,\dots, v_{|\gamma|}$ be the vertices along $\gamma$.
When $m$ is large enough they are all contained in $\widehat K_m^{(0)}$, and we have that
    $$\ell_m(\gamma)=\sum_{i=0}^{|\gamma|} 2 R_m(v_i).$$
    Therefore, the condition that $R_m\to 0$ implies that $\ell_m(\gamma)\to 0$.
We show that this leads to a contradiction by distinguishing two cases.

First, if $S$ is a thrice-punctured sphere, then the sequence of metrics $h_m$ is constant, see Remark~\ref{rem: 3p sphere conformal}.
Let $\gamma$ be an essential and non-peripheral edge-loop.
(Here, a curve is essential if it is not nullhomotopic, and non-peripheral if it is not homotopic to a puncture.)
Let $\bar \gamma$ be the geodesic representative of the free homotopy class of $\gamma$. 
Since $\gamma$ is essential and non-peripheral, $\bar \gamma$ is not reduced to a single puncture, so its length $\ell_m(\bar\gamma)$ is some positive constant $L>0$; note that $L$ is independent of $m$ because the sequence of metrics $h_m$ is constant.
It follows that
$\ell_m(\gamma)\geq \ell_m(\bar\gamma)=L>0$, so $\ell_m(\gamma)$ cannot go to zero.

Now suppose $S$ is not a thrice-punctured sphere.
 Pick two simple edge-loops $\alpha$ and $\beta$ in $K^{(1)}$ which are essential non-peripheral and intersect transversely.
 Let $\alpha_m$ and $\beta_m$ be the geodesic representatives of $\alpha$ and $\beta$ in the metric $h_m$.
 Then $\alpha_m$ and $\beta_m$ are simple closed geodesics intersecting transversally and not going through cone points, see \cite[Theorem 5.1]{TWZ06} for details. 
 % (This uses that $\theta_i<\pi$.)
 As before, since $R_m\to 0$, we have  that the lengths of $\alpha$ and $\beta$ go to zero, therefore the length of their geodesic representatives $\alpha_m$ and $\beta_m$ also goes to zero.
 However,  the collar lemma for hyperbolic surfaces with cones smaller than $\pi$ (see \cite[Corollary 4]{DP07})    implies  that
    $$\sinh \left(\frac{\ell_m(\alpha_m)}{2}\right) \cdot \sinh \left(\frac{\ell_m(\beta_m)}{2} \right)\geq \cos \left( \frac{\max \Theta}{2} \right).$$
This leads to the desired contradiction.
\end{proof}

We are now ready to complete the proof of Theorem~\ref{thm: main convergence}. Recall that in order to simplify the exposition, cusps are regarded as conical singularities with angle $\theta=0$.

\begin{proof}[Proof of Theorem~\ref{thm: main convergence}]
    The function $R$ takes values in $(0,+\infty)$, hence defines a label for $K$.
    We get the associated hyperbolic metric $h_{K,\Theta}$  by pasting triangles as in \S\ref{sec:labels}.
    Since $R$ is finite at every vertex, 
    this metric has angle $2\pi$ at every vertex of $K$.
    Note that by construction $h_{K,\Theta}$  is approximated by the metrics $h_m$. More precisely, note that in the construction in \S\ref{sec:labels}, the hyperbolic metric depends continuously on the defining label, in the sense that nearby labels define nearby metrics, in the space of hyperbolic metrics with conical singularities; see \cite{BS04}.
    In particular, the metric defined by the limit of a sequence of labels is the limit of the metrics defined by the labels in the sequence.
    % (recall from Remark~\ref{rem: conical unif} that this is a copy of Teichm\"uller space).
    Moreover, all the metrics $h_m$ have the desired cone angles at the points in $P$; here we are considering $h_m$ as a metric on the closed surface $\bar S$ with conical singularities at the points in $P$.
    Since having prescribed angles at a prescribed collection of points is a closed condition in the space of hyperbolic metrics with conical singularities, the limiting metric $h_{K,\Theta}$ will have the desired conical singularities as well. 
\end{proof}

%%%%%%%%%%%%%%

\section{Prescribing the conformal class}\label{sec: prescribe conformal}
\noindent In many cases, we can arrange for the sequence of hyperbolic metrics $h_m$ to be even constant. 
Since they are hyperbolic metrics on the same topological surface and have the same conical singularities, it is enough to show that they are in the same conformal class; see Remark~\ref{rem: conical unif}.
For an easy example, when $S$ is a thrice-punctured sphere there is only one conformal class, hence all the $h_m$ are automatically isometric to each other; see Remark~\ref{rem: 3p sphere conformal}.
Examples in higher genus can be obtained by taking suitable normal branched covers of the thrice-punctured sphere, as we now describe.

A Riemann surface $X$ of genus $g\geq 2$
is called \textit{\gb}  if it satisfies any of the following  equivalent conditions (see \cite{WO97} and \cite{FR19}):
\begin{enumerate}
  \item $X/\aut X = \cp$ and the quotient map $\pi:X\to X/\aut X$ is ramified exactly on three points.
  \item $X$ is an isolated local maximum for the function $Y\mapsto \#\aut Y$.
  \item $X\cong\hh^2/\Gamma$, where $\Gamma$ is a cocompact torsion-free Fuchsian group whose normalizer in $\pslr$ is a hyperbolic triangle group.
  \item The Fuchsian uniformization of $X$ is the unique complex projective structure on $X$ such that every automorphism of $X$ is complex projective.
\end{enumerate}
Note that if $X$ is \gb \ then the quotient map $\pi:X\to X/\aut X$ is both a Bely\u{\i} function  and a (branched) Galois cover.
These surfaces, also known as \textit{quasiplatonic surfaces} (see \cite{SI}), play a key role in the theory of dessins d'enfants.
We refer to \cite[Chapter 5]{JW} for more details and background about their theory.

\begin{example}\label{ex:gb}
Classical examples of \gb \ surfaces are given by Hurwitz surfaces. 
Hurwitz surfaces do not occur in every genus: for instance, they do not exist in genus $2$.
On the other hand, for any genus $g\geq 2$,  one can obtain examples of \gb \ surfaces by looking at the hyperelliptic curves $y^2=x^m-1$ in $\cc^2$, see \cite[\S 5.3]{JW}. 
For analogues in genus $g=0$ one should consider punctured spheres with very symmetric configurations of punctures, such as the one given by the regular tetrahedron. In genus $g=1$ one can take the two complex tori with extra automorphisms, namely the ones arising from the tilings of the Euclidean plane by unit squares and equilateral triangles.
    The following arguments can be adapted to these cases, and we leave the details to the interested reader.
\end{example}

\begin{remark}
   Riemann surfaces that admit a Bely\u{\i} function  are exactly the ones that can be defined over $\overline{\qq}$ as algebraic curves.
   They form a countable dense subset of the moduli space. 
   On the other hand, \gb \ surfaces of a fixed genus $g$ form a finite set;  see \cite{POP} and \cite{SPW}.
\end{remark}

%%%

Let $X$ be a \gb \ surface of genus $g \geq 2$. Then $X$ carries a standard triangulation $\widehat K$, obtained by pulling back a decomposition of $X/\aut X$ in two triangles whose vertices are the three ramification points of $\pi:X\to X/\aut X$.
In particular, every vertex of $\widehat K^{(0)}$ lies in the preimage of one of the ramification points.
Note that this is well-defined up to isotopy, and that $\aut X$ is a finite group.

\begin{theorem}\label{thm:failure KMT finite}
    Let $\bar S$ be a closed surface of genus $g\geq 2$, let $X$ be a \gb \ complex structure on it, and let $\widehat K$ be the associated triangulation of $\bar S$.
    Then there exists $P=\{p_1,\dots,p_n\} \subseteq  \widehat K^{(0)}$ such that 
    for any $\theta \in [0,\pi)$ 
    there exists a hyperbolic metric 
    in the conformal class of $X$
     which has cone angle $\theta$ at $p_i$ for all $i=1,\dots,n$
    and supports a circle packing in the combinatorics of $\widehat K$.
\end{theorem}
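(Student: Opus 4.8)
\section*{Proof proposal}

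The plan is to realize the desired metrics as $G$-invariant objects coming from the quotient orbifold, where $G=\aut X$. Write $\pi\colon X\to X/G=\cp$ for the Galois branched cover, ramified exactly over three points $q_1,q_2,q_3\in\cp$; by definition of a \gb\ surface the triangulation $\widehat K$ is the $\pi$-pullback of the decomposition of $\cp$ into two triangles with vertices $q_1,q_2,q_3$. I would take $P=\widehat K^{(0)}$, so that the vertices of $\widehat K$ are exactly the points of $\pi^{-1}\{q_1,q_2,q_3\}$, on which $G$ acts with three orbits $O_1,O_2,O_3$. The cone data $\Theta$ will be taken constant on each orbit, equal to $\theta_i$ on $O_i$; this is the symmetric case that drives the argument, and it is already enough to produce infinitely many structures in the conformal class $X$.

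Given such a $G$-invariant $\Theta$ satisfying \eqref{eq: orbchar}, the first step is to apply Theorem~\ref{thm:combinatorial conical uniformization} to obtain a label $\widehat R$, the conical hyperbolic metric $h_{\widehat K,\Theta}$ with cone angle $\theta_i$ on $O_i$, and the packing $\mathcal P_{\widehat K,\Theta}$. The key observation is that $G$ acts simplicially on $\widehat K$, since it permutes the fibers $O_i$ and preserves the canonical decomposition, and the angle sum function $\phi_\Theta$ is $G$-invariant because $\Theta$ is constant on orbits. Since the label with prescribed angle sum function is \emph{unique} by \cite[Theorem 4.1]{B93}, I conclude that $\widehat R$ is itself $G$-invariant. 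Hence $h_{\widehat K,\Theta}$ and $\mathcal P_{\widehat K,\Theta}$ are $G$-invariant, and $G$ acts on $(\bar S,h_{\widehat K,\Theta})$ by orientation-preserving isometries, hence by biholomorphisms of the conformal structure $X_{\widehat K}$ underlying $h_{\widehat K,\Theta}$.

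The second step is to identify $X_{\widehat K}$ with $X$. Passing to the quotient, $X_{\widehat K}/G$ is a complex structure on the sphere, hence biholomorphic to $\cp$, and $\pi\colon(\bar S,X_{\widehat K})\to\cp$ becomes a holomorphic Galois branched cover with deck group $G$, ramified over three points. On the other hand the \gb\ structure $X$ is by definition the analogous holomorphic cover $\pi\colon(\bar S,X)\to\cp$, with the same underlying topological map, the same deck group, and the same ramification. Since any three points of $\cp$ can be sent to $\{0,1,\infty\}$ by a M\"obius transformation, the two covers share branch locus and monodromy, so by the rigidity of branched covers of $\cp$ they define the same complex structure: $X_{\widehat K}\cong X$. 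Equivalently, the complex structure making $\pi$ holomorphic onto $\cp$ is uniquely the $\pi$-pullback of the standard structure off the ramification points together with its removable-singularity extension. Thus $h_{\widehat K,\Theta}$ lies in the conformal class $X$, realizes the angles $\theta_i$ on $O_i$, and supports $\mathcal P_{\widehat K,\Theta}$ in the combinatorics of $\widehat K$.

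I expect the main obstacle to be this second step, namely controlling the conformal class of the constructed metric. The uniqueness-forces-symmetry argument is clean, but turning $G$-invariance into the precise statement $X_{\widehat K}\cong X$ requires care: one must check that $G$ acts holomorphically (using orientation-preservation together with the fact that isometries of a metric preserve its conformal structure), that the quotient is genuinely $\cp$ and not merely a topological sphere, and that the resulting branched cover matches the one defining $X$. The virtue of the \gb\ hypothesis is exactly that it rigidifies this last identification, making the conformal class independent of the chosen angles $\theta_i$.
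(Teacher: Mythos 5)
Your proposal is correct and follows essentially the same route as the paper's own proof: both use the uniqueness of the label in \cite[Theorem 4.1]{B93} to force $\aut X$-invariance of the packed metric $h_{\widehat K,\Theta}$, pass to the quotient to see that the underlying conformal structure is a Galois branched cover of $\cp$ over three points, and conclude $X_{\widehat K,\Theta}=X$ from the fact that all triples in $\cp$ are M\"obius-equivalent (both arguments gloss over the marking issue in the same way). The only difference is that you explicitly restrict $\Theta$ to be constant on $\aut X$-orbits (with $P=\widehat K^{(0)}$), while the paper takes an arbitrary invariant $P$ and quantifies over all $\Theta$; but the paper's claim that $\aut X$ acts isometrically on $h_{\widehat K,\Theta}$ tacitly requires exactly this orbit-invariance of the cone data, so your version makes explicit a hypothesis the paper's proof also needs.
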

\begin{proof}
    By construction, $\aut X$ acts simplicially on $\widehat K$.
    Let $P=\{p_1,\dots,p_n\}$ be any $\aut X$-invariant collection of vertices of $\widehat K$ (e.g., the orbit of a single vertex).
    Consider the vector of cone angles $\Theta=(\theta,\dots,\theta)$, i.e., assign angle $\theta_i=\theta$ to each $p_i$.
    % (More generally, one could assign different values $\theta_i$ to different $\aut X$-orbits, but we choose a single value for simplicity.)
    Since $g\geq 2$, we have that $\Theta$ automatically satisfies \eqref{eq: orbchar}.
    
    Let $h_{\widehat K,\Theta}$ be the hyperbolic metric on $\bar S$ from Theorem~\ref{thm:combinatorial conical uniformization}, and let $X_{\widehat K, \Theta}$ be the underlying conformal structure.
    Note that the isometry type of each triangle is completely determined by the combinatorics of $\widehat K$ and the angle vector $\Theta$; see \S\ref{sec:labels}.
    Since $\Theta$ is clearly invariant under the action of $\aut X$,  by Remark~\ref{rem:invariance} we have that $\aut X$ acts isometrically on $h_{\widehat K,\Theta}$, hence conformally on $X_{\widehat K, \Theta}$.
% , regarded as a complex structure on the compact surface $\bar S$.
Since $X,X_{\widehat K,\Theta}$ are complex structures on the same surface $\bar S$, and since $X$ is \gb, we have that topologically $X_{\widehat K,\Theta} / \aut{X} = X/\aut X = \cp$, with quotient map branching over three points.
This implies that $X_{\widehat K,\Theta}$ is \gb.
Actually, $X_{\widehat K,\Theta}=X$, because the two  complex structures are given by the same branched cover of $\cp$ over three points, and all triples of points in $\cp$ are conformally equivalent.
% Since there are only finitely many \gb surfaces of genus $g$, up to subsequences we can assume that $X_m$ is constant, hence 
In particular,  $h_{\widehat K,\Theta}$ is in the conformal class $X$, independently of $\Theta$.     
\end{proof}

An analogous result holds for infinite triangulations of a punctured surface.

\begin{theorem}\label{thm:failure KMT infinite}
Let $\bar S$ be a closed surface of genus $g\geq 2$, and let $X$ be a \gb \ complex structure on it.
Then there exists $P=\{p_1,\dots,p_n\} \subseteq \bar S$ and a triangulation $K$ of $S=\bar S\setminus P$ such that 
for any $\theta \in [0,\pi)^n $ 
there exists a hyperbolic metric 
in the conformal class of $X$
     which has cone angle $\theta$ at $p_i$ for all $i=1,\dots,n$
    and supports a circle packing in the combinatorics of $K$.
\end{theorem}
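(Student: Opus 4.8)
The plan is to run the equivariant argument of Theorem~\ref{thm:failure KMT finite} through the limiting machinery of Theorem~\ref{thm: main convergence}. As in the finite case, $\aut X$ acts simplicially on the standard triangulation $\widehat K$ of $X$, and I would take $P=\{p_1,\dots,p_n\}$ to be an $\aut X$-invariant collection of vertices of $\widehat K$, so that $\aut X$ permutes the punctures. The first task is to produce an $\aut X$-invariant triangulation $K$ of $S=\bar S\setminus P$. I would obtain $K$ by deleting the vertices in $P$ from $\widehat K$ and equivariantly refining a neighborhood of each puncture: the stabilizer of $p_i$ in $\aut X$ is cyclic, and I would fill a neighborhood of $p_i$ with a rotationally symmetric sequence of concentric rings of triangles accumulating at $p_i$, chosen to be invariant under this stabilizer and to have constant combinatorial length. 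This yields a triangulation of bounded degree (as required by Theorem~\ref{thm: main convergence}) with parabolic ends, on which $\aut X$ still acts simplicially.

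Next I would choose the exhausting sequence of peripheral systems $C^m$ to be $\aut X$-invariant, so that each coned-off triangulation $\widehat K_m$ is $\aut X$-invariant. By the uniqueness clause of Theorem~\ref{thm:combinatorial conical uniformization}, the associated conical hyperbolic metric $h_m$, circle packing $\mathcal P_m$, and label $\widehat R_m$ are then $\aut X$-invariant, exactly as in the finite case. Applying Theorem~\ref{thm: main convergence} to $K$ and $\Theta$, after passing to a subsequence the metrics $h_m$ converge to a hyperbolic metric $h_{K,\Theta}$ on $\bar S$ with cone angle $\theta_i$ at $p_i$ supporting a circle packing in the combinatorics of $K$, and the labels $R_m$ converge to the limit label $R$. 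Since being $\aut X$-invariant is a closed condition, $R$, and hence $h_{K,\Theta}$, are $\aut X$-invariant; thus $\aut X$ acts by isometries on $h_{K,\Theta}$, and therefore conformally on the underlying conformal structure $X_{K,\Theta}$ of the closed surface $\bar S$.

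Finally I would invoke \gb\ rigidity exactly as in the proof of Theorem~\ref{thm:failure KMT finite}. The $\aut X$-action on $X_{K,\Theta}$ is topologically the same as the action on $X$, so the quotient $X_{K,\Theta}/\aut X$ is $\cp$ with quotient map branched over three points; hence $X_{K,\Theta}$ is \gb, and being the same branched cover of $\cp$ over three points (any triple of which is conformally equivalent to a fixed one) we conclude $X_{K,\Theta}=X$. Therefore, for every $\Theta\in[0,\pi)^n$ satisfying \eqref{eq: orbchar}, the metric $h_{K,\Theta}$ lies in the fixed conformal class $X$ and carries a circle packing in the combinatorics of $K$; by Remark~\ref{rem: conical unif} it coincides with $h_{X,\Theta}$, so the whole family is realized inside the single conformal class $X$.

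I expect the \emph{main obstacle} to be the combinatorial construction of the symmetric triangulation $K$: one must simultaneously guarantee bounded degree (so that Theorem~\ref{thm: main convergence} applies), parabolic ends (so that the limit is a conical metric on the closed surface $\bar S$ rather than one with open ends), and invariance under the cyclic stabilizer of each puncture (so that the equivariant scheme survives into the limit and the three-point branching data defining the \gb\ structure is preserved). Checking that these requirements can be met at once, and that the induced $\aut X$-action on $X_{K,\Theta}$ indeed has the correct branching so that $X_{K,\Theta}=X$, is the crux; the analytic convergence itself is handed to us by Theorem~\ref{thm: main convergence}.
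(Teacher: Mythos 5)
Your proposal is correct and is essentially the paper's own proof: the paper constructs $K$ exactly in the spirit you describe, by iterating a self-similar subdivision of the stars of an $\aut{X}$-invariant vertex set $P$ in the barycentric subdivision of the standard triangulation (Figure~\ref{fig:subdivision}), which simultaneously yields bounded degree, constant-length peripheral rings (hence parabolic ends), and $\aut{X}$-equivariance --- resolving the ``crux'' you flag --- and then feeds the equivariant finite stages into Theorem~\ref{thm: main convergence} together with the rigidity argument of Theorem~\ref{thm:failure KMT finite}. The only (harmless) structural difference is where rigidity is applied: the paper applies the Theorem~\ref{thm:failure KMT finite} argument to each finite-stage metric $h_m$, so that by uniqueness of conical uniformization (Remark~\ref{rem: conical unif}) the sequence $h_m$ is constant, equal to $h_{X,\Theta}$, and the limit lies in the class $X$ with nothing to pass to the limit, whereas you carry $\aut{X}$-invariance of the labels through the limit and apply rigidity once to the limiting structure --- equally valid, at the cost of the extra (true) observation that invariance survives pointwise convergence of the labels.
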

  
\begin{proof}
Let $K_0$ be the standard triangulation of $\bar S$ induced by the \gb \ structure $X$.
Let $K_0'$ be the first barycentric subdivision of $K_0$.
By construction, $\aut X$ acts simplicially on $K_0$ and on $K_0'$.
Let $P=\{p_1,\dots,p_n\}$ be any $\aut X$-invariant collection of vertices of $K_0$  (e.g., the orbit of a single vertex), regarded as vertices of $K_0'$. 
    
    The stars of vertices of $P$ in $K_0'$ are a collection of triangulated disks with disjoint interiors.
    Subdivide $K_0'$ as follows:
    if $v\in P$, then subdivide triangles in $\st{v}{K_0'}$ as shown on the left in Figure~\ref{fig:subdivision}. Do not subdivide any other triangle. (The right-hand side of picture shows the effect of applying this subdivision twice to the disk on the left-hand side.)
    The star of $v$ in this subdivision is isomorphic to the star of $v$ in $K_0'$, so we can iterate the process.
    Since stars of vertices of $P$ have disjoint interiors, this can be performed simultaneously at all vertices of $P$.
    We obtain a sequence of triangulations $K_m$ of $\bar S$.
    In the limit we get a triangulation $K$ of $S$, in which all vertices of $P$ have become punctures.
    
    Note that by construction $K$ comes with a natural choice of a peripheral system with respect to $P$, 
    given by $\lk{v}{K_m}$ for $v\in P$.
    Moreover, $K$ has bounded degree and $\aut X$ still acts simplicially on $K_m$ and $K$.
    Once again, consider the constant vector $\Theta=(\theta,\dots,\theta)$, which  is $\aut X$-invariant and automatically satisfies \eqref{eq: orbchar}, since $g\geq 2$.
    Then we get hyperbolic metrics $h_m=h_{K_m,\Theta}$ as in \S\ref{sec: circle pack infinite}.
    Moreover, arguing with Remark~\ref{rem:invariance} as in the proof of Theorem~\ref{thm:failure KMT finite} we have that $\aut X$ acts by isometries of these metrics, hence all the metrics $h_m$ are in the same conformal class $X$, independently of $\Theta$.
    In the limit for $m \to \infty$, we get a metric $h$ in the conformal class $X$, with the prescribed angles $\Theta$ on $P$, and supporting a circle packing in the combinatorics of $K$.
    \end{proof}
    
\begin{figure}[h]
    \centering
    \includegraphics[width=.75\textwidth]{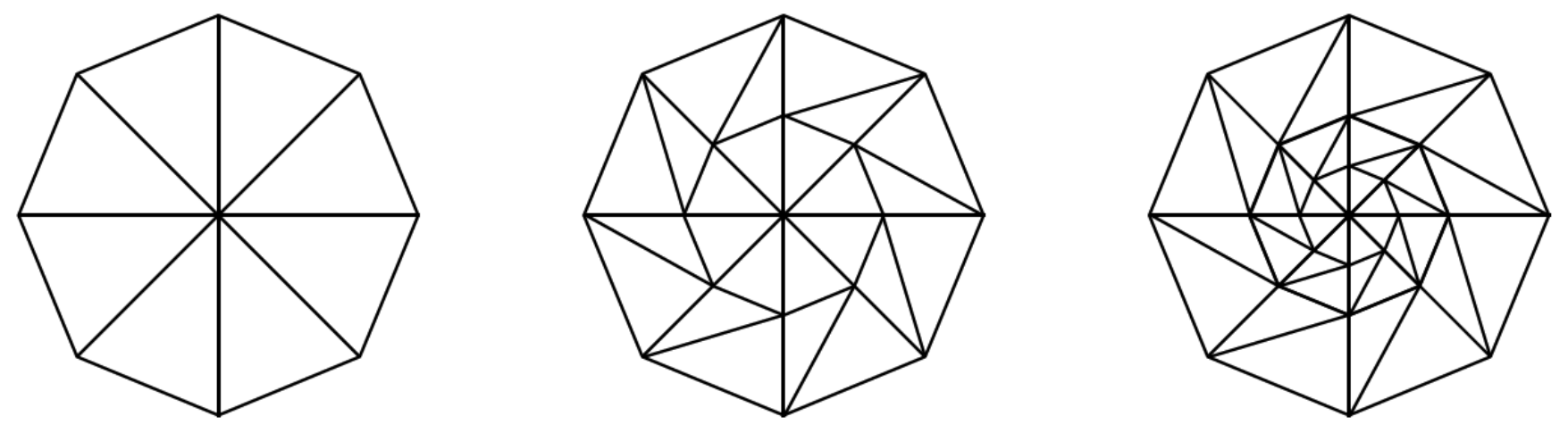}
    \caption{Two steps in the subdivision in the proof of Theorem~\ref{thm:failure KMT infinite}.}
    \label{fig:subdivision}
\end{figure}

Note that the subdivision procedure in Figure~\ref{fig:subdivision} provides an infinite triangulation $K$ in which each end is accumulated by a sequence of edge-loops of constant combinatorial length. 
In particular, it follows that $K$ has parabolic ends, and therefore the packing fills the surface; see \S\ref{sec:triangulations} and \cite{HS95}.

\begin{remark}
    In the above statements, the minimal collection of punctures $P$ is the $\aut X$ orbit of a single vertex, whose size is naively bounded just in terms of the genus by $n\leq |\aut X|\leq 84(g-1)$, thanks to the classical Hurwitz  Theorem; see \cite{HU}.
    More generally, the set of punctures $P$ can consist of all the preimages of one, two, or all three ramification points of $\pi:X\to X/\aut X$.
\end{remark}

We conclude by showing that the non-compact analogue of the conjecture by Kojima-Mizushima-Tan (see \cite{KMT03,KMT06}) does not hold.
Recall the following setup from \S\ref{sec:intro}.
For a surface $S$, we denote by $\projteich S$  the space of projective structures with  conical singularities or cusps.
It contains the space of hyperbolic metrics with conical singularities or cusps.
Given a triangulation $K$, we denote by $\projlocus SK$ the subspace of $\projteich S$ consisting of projective structures supporting a circle packing in the combinatorics of $K$, and we consider the natural forgetful map
$$\kmt K:\mathcal C(S,K)\to \teich S .$$
The following results are obtained from Theorem~\ref{thm:failure KMT finite} and Theorem~\ref{thm:failure KMT infinite} respectively by taking $X$ to be a \gb \  complex structure, and choosing angles $\theta<\pi$.

\begin{corollary}
Let $\bar S$ be a closed surface of genus $g\geq 2$.
There exists a triangulation $K$ of $\bar S$ and  a complex structure $X$ on $\bar S$ such that $\kmt K^{-1}(X)$ is uncountable.
\end{corollary}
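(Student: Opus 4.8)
The plan is to read off the statement from Theorem~\ref{thm:failure KMT finite}, which already produces a circle-packed conical structure over a \emph{fixed} conformal class for every admissible angle vector; the only thing left is to see that varying the angles sweeps out uncountably many distinct points of the fibre. First I would fix the genus $g\geq 2$ and choose $X$ to be a \gb\ complex structure on $\bar S$, which exists in every such genus --- for instance the hyperelliptic curve $y^2=x^m-1$ of Example~\ref{ex:gb}. Let $K$ be the triangulation of $\bar S$ associated to $X$ by pulling back the two-triangle decomposition of $\cp$. Applying Theorem~\ref{thm:failure KMT finite} to $X$ and $K$ yields an $\aut X$-invariant set $P=\{p_1,\dots,p_n\}$ of vertices of $K$ --- nonempty, being the orbit of a vertex --- such that for \emph{every} $\Theta\in[0,+\infty)^n$ satisfying \eqref{eq: orbchar} there is a hyperbolic metric $h_{K,\Theta}$ with cone angle $\theta_i$ at $p_i$, lying in the conformal class $X$, and carrying a circle packing in the combinatorics of $K$. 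Each such metric is a point of the fibre $\kmt K^{-1}(X)$.

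Next I would check that the set of admissible angle vectors is uncountable. Writing out \eqref{eq: orbchar} gives $\sum_{i=1}^n\theta_i<2\pi(n+2g-2)$, whose right-hand side is strictly positive for $g\geq 2$; hence the inequality holds on a nonempty open subset of $[0,+\infty)^n$, and intersecting this with the cube $(0,\pi)^n$ still leaves a nonempty open --- in particular uncountable --- set $U$ of admissible $\Theta$. Thus $\{h_{K,\Theta}:\Theta\in U\}\subseteq\kmt K^{-1}(X)$, and it remains only to argue that this assignment is injective.

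The crux, and the step I expect to be the main obstacle, is to verify that distinct angle vectors yield genuinely distinct projective structures, rather than merely distinct metrics. For this I would recover the cone angle as a projective invariant through the holonomy at the punctures: around a small loop $\gamma_i$ encircling $p_i$, the developing map of $h_{K,\Theta}$ is composed with an elliptic element of $\pslr\subset\pslc$ that rotates by exactly the total cone angle $\theta_i$, so the conjugacy class of the holonomy $\rho(\gamma_i)$ --- equivalently $|\operatorname{tr}\rho(\gamma_i)|=2\cos(\theta_i/2)$ --- determines $\theta_i$. Since $U\subseteq(0,\pi)^n$, the map $\theta_i\mapsto 2\cos(\theta_i/2)$ is injective on each coordinate, so any two distinct $\Theta,\Theta'\in U$ differ in some coordinate $i$ and the corresponding holonomy representations differ on $\gamma_i$; the two projective structures are therefore inequivalent. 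Consequently $\Theta\mapsto h_{K,\Theta}$ is injective on $U$, and $\kmt K^{-1}(X)$ is uncountable. The delicate point throughout is exactly this independence of the cone angle from the ambiguities of $\pslc$-conjugacy, which is why I would read it off the elliptic rotation angle at a puncture rather than from the metric angle sum.
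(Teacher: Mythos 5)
Your proposal is correct and follows essentially the same route as the paper, which derives this corollary in one line from Theorem~\ref{thm:failure KMT finite} by taking $X$ to be a \gb\ structure and letting $\Theta$ range over the (uncountable, nonempty open) set of angle vectors satisfying \eqref{eq: orbchar}. Your extra step---checking that distinct admissible $\Theta$ give distinct points of the fibre by reading the cone angle off the elliptic holonomy trace $2\cos(\theta_i/2)$ around each $p_i$---is a detail the paper leaves implicit, and it is sound.
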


\begin{corollary}
Let $\bar S$ be a closed surface of genus $g\geq 2$.
Then there exists $P=\{p_1,\dots,p_n\} \subseteq \bar S$, a triangulation $K$ of $S=\bar S\setminus P$ and a complex structure on $S$ such that $\kmt K^{-1}(X)$ is uncountable.
\end{corollary}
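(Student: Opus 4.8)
The plan is to reduce the statement to the symmetric finite situation of Theorem~\ref{thm:failure KMT finite} by constructing an infinite triangulation $K$ of $S$ that retains the full symmetry of $X$, and then to feed it into the convergence machinery of Theorem~\ref{thm: main convergence}. Concretely, I would start from the standard triangulation $K_0$ of $\bar S$ associated to the \gb \ structure $X$, on which $\aut X$ acts simplicially, and take $P$ to be a single $\aut X$-orbit of vertices. The only nuisance is that distinct vertices of $P$ may be adjacent in $K_0$, so I would first pass to the barycentric subdivision $K_0'$: this separates the stars $\st{v}{K_0'}$, $v\in P$, into triangulated disks with pairwise disjoint interiors, while keeping the $\aut X$-action simplicial.

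Next I would manufacture the infinite triangulation by an $\aut X$-equivariant, self-similar refinement supported on these disks. Fixing once and for all a combinatorial subdivision of a triangulated disk (as in Figure~\ref{fig:subdivision}) with the property that the star of the central vertex is recreated isomorphically after refinement, I would apply it simultaneously inside every $\st{v}{K_0'}$, leaving the rest of $K_0'$ untouched. Because the rule only touches the disjoint central disks and is applied uniformly, it is $\aut X$-equivariant and can be iterated, producing a nested sequence of triangulations $K_m$ of $\bar S$ whose limit is an infinite triangulation $K$ of $S=\bar S\setminus P$, with the vertices of $P$ accumulated by ever smaller triangles. Self-similarity guarantees that $K$ has bounded degree, so that Theorem~\ref{thm: main convergence} applies, and that the peripheral loops $\lk{v}{K_m}$ have constant combinatorial length, which forces the ends to be parabolic and the points of $P$ to be genuine punctures. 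The $K_m$ are then exactly the coned-off triangulations attached to this natural peripheral system, and $\aut X$ acts simplicially on each $K_m$ and on $K$.

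With $K$ in hand, the conformal-class statement follows by running the symmetry argument of Theorem~\ref{thm:failure KMT finite} on every approximant. Given $\Theta\in[0,\pi)^n$ satisfying \eqref{eq: orbchar}, Theorem~\ref{thm:combinatorial conical uniformization} produces for each $m$ a conical hyperbolic metric $h_m=h_{K_m,\Theta}$ packing $K_m$. Since the isometry type of each triangle depends only on the combinatorics and on $\Theta$, and since $P$ is $\aut X$-invariant, $\aut X$ acts isometrically on $h_m$, hence conformally on the underlying conformal structure; the quotient is $\cp$ branched over three points, so exactly as in Theorem~\ref{thm:failure KMT finite} this structure is $X$. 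By the uniqueness of conical uniformization in a fixed conformal class with fixed cone angles (Remark~\ref{rem: conical unif}), every $h_m$ coincides with the single metric $h_{X,\Theta}$, so the sequence is in fact constant. Feeding this into Theorem~\ref{thm: main convergence}, the radius labels $R_m$ converge to a positive label $R$ defining a metric $h_{K,\Theta}$ that packs $K$; being the limit of the constant sequence $h_{X,\Theta}$, this limit metric equals $h_{X,\Theta}$ itself, which lies in the conformal class $X$ and carries the prescribed cone angles.

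I expect the genuine obstacle to be the combinatorial construction of $K$, not the geometric conclusion. One must exhibit a single subdivision rule for a triangulated disk that is simultaneously equivariant under the symmetries of the central flower, so that it glues to an $\aut X$-equivariant refinement; self-similar, so that it can be iterated indefinitely while reproducing the same peripheral combinatorics; and degree-bounded, so that the hypotheses of Theorem~\ref{thm: main convergence} and of the ring lemmas hold uniformly along the sequence. Once such a rule is fixed (Figure~\ref{fig:subdivision}), the remaining verifications — parabolicity of the ends from the constant peripheral length, equivariance, and the accumulation of vertices onto $P$ — are routine, and the conformal rigidity of the symmetric approximants together with the convergence theorem finish the proof.
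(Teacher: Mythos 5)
Your proposal takes essentially the same route as the paper: the paper proves this corollary by simply invoking Theorem~\ref{thm:failure KMT infinite}, whose proof is exactly your construction (barycentric subdivision of the \gb\ triangulation, iterated $\aut X$-equivariant self-similar subdivision at the disjoint stars of an orbit $P$, constancy of the approximating metrics $h_m=h_{X,\Theta}$ via the symmetry argument of Theorem~\ref{thm:failure KMT finite} and conical uniformization, and Theorem~\ref{thm: main convergence} to pass to the limit). The only point left implicit in your write-up, which the paper also leaves implicit, is the final trivial observation that since every $\Theta\in[0,\pi)^n$ satisfies \eqref{eq: orbchar} when $g\geq 2$, and distinct angle vectors give distinct structures, this produces uncountably many elements of $\kmt K^{-1}(X)$.
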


The difference between the two statements is the following. 
In the former, $K$ is a triangulation of a closed surface and the structures have conical singularities at some of the  vertices of $K$.
In the latter, $K$ is a  triangulation of a non-compact surface of finite type and the structures have conical singularities precisely at ends of $K$; in this case $K$ has infinitely many vertices accumulating into the ends of $K$, and the metrics are smooth at the vertices of $K$.

Note that choosing different values for $\theta$ results in different hyperbolic structures that remain different when regarded as projective structures (e.g., punctures with different cone angle have different peripheral holonomy; see \cite{BBCR21}).

% \bibliographystyle{plain}
% \bibliography{biblio} 
\printbibliography

\end{document}